\documentclass[11pt]{amsart}
\usepackage{amsmath}
\usepackage{cases}
\usepackage{mathrsfs}
\usepackage{bbm}
\usepackage{amssymb}
\usepackage{txfonts}
\usepackage{amscd}
\usepackage{amsfonts,latexsym,amsmath,amsthm,amsxtra,mathdots,amssymb,latexsym,mathabx}
\usepackage[all,cmtip]{xy}
\usepackage{cite}
\RequirePackage{amsmath} \RequirePackage{amssymb}
\usepackage{color}
\usepackage{colordvi}
\usepackage{multicol}
\usepackage{hyperref}
\usepackage[toc,page]{appendix}

%\pagenumbering{alph}
 
%\usepackage{tikz}
%\usepackage{multirow,booktabs,setspace,caption}

\usepackage[margin=1in]{geometry}

\usepackage{xcolor}
\hypersetup{
    colorlinks,
    linkcolor={red!50!black},
    citecolor={blue!50!black},
    urlcolor={blue!80!black}
}

 % 

%added by Roman%%%%%

%%%%%%%%%%%%%

%cohomology groups

\def\mod{\mathrm{mod}\ }

\def\sumx{\sideset{}{^\star}\sum}

    \newcommand{\BC}{{\mathbb {C}}} 
     \newcommand{\BF}{{\mathbb {F}}}

    \newcommand{\BQ}{{\mathbb {Q}}} \newcommand{\BR}{{\mathbb {R}}}

     \newcommand{\BZ}{{\mathbb {Z}}}

     \newcommand{\CJ}{{\mathcal {J}}}

    \newcommand{\CQ}{{\mathcal {Q}}}

\newcommand{\units}{\mathcal{O}_K^\times}
  
	% For Integers
\newcommand{\R}{\mbox{$\mathbb R$}}	% For Integers

\def\fra{\mathfrak{a}}

\def\frd{\mathfrak{d}}
\def\frg{\mathfrak{g}}

\def\frp{\mathfrak{p}}

\def\-{^{-1}}

\def\frl{\mathfrak{L}}

\def\calO{\mathcal{O}}

\def\calN{\mathcal{N}}
\def\calA{\mathcal{A}}
\def\calB{\mathcal{B}}

\def\-{^{-1}}

\newcommand{\delete}[1]{}

    \theoremstyle{plain}

    \newtheorem{thm}{Theorem}[section] \newtheorem{cor}[thm]{Corollary}
    \newtheorem{lem}[thm]{Lemma}  \newtheorem{prop}[thm]{Proposition}

    \newtheorem {rem}[thm]{Remark} 
    \newtheorem {notation}[thm]{Notation}
    
    %accented words

    \newcommand{\Gros}{Gr\"{o}ssencharakter~}
    \newcommand{\Lind}{Lindel\"{o}f~}
    
    \numberwithin{equation}{section}

\makeindex
\setcounter{tocdepth}{1}

\begin{document}

\title{Subconvexity Bound for Hecke character $L$-Functions of Imaginary quadratic Number fields}
\maketitle
\begin{center}
\author{Keshav Aggarwal}
\end{center}
\begin{abstract}
Let $K=\BQ(\sqrt{-D})$ be an imaginary number field, $(p)=\frp\frp'$ be a split odd prime and $\psi$ be a Hecke character of conductor $\frp$. Let $L(s,\psi)$ be the associated $L$-function. We prove the Burgess bound in $t$-aspect and a hybrid bound in conductor aspect,
\begin{equation*}
L(1/2+it,\psi)\ll_{D,\varepsilon} (1+|t|)^{3/8+\varepsilon}p^{1/8}
\end{equation*}
for $p\ll t$. In Appendix A, we present the ideas for an elementary proof of Voronoi summation formula for holomorphic cusp forms with CM and squarefree level. This is done by exploiting the lattice structure of ideals in number fields. Voronoi summation for such cusp forms is given by Kowalski, Michel and Vanderkam \cite{kmv}. We hope that our method of proof can extend their result to any CM cusp form in $S_k(\Gamma_1(N))$ and arbitrary additive twist. We encounter quadratic and quartic Gauss sums in the process. We shall present the calculations for the general case in the next version of the paper.
\end{abstract}

\tableofcontents

\section{Introduction}

Let $K=Q(\sqrt{-D})$ be an imaginary quadratic number field with $D>0$ squarefree. The units in the ring of integers $\calO_K$ are the roots of unity $\mu_K$. Let $p$ be an odd split prime, say $(p)=\frp\frp'$. By Dedekind's theorem, the polynomial $f(x)=x^2+D=(x+d)(x-d)\bmod p$ splits into linear factors. Let $\frp$ be generated by $\{p,\sqrt{-D}-d\}$ over $\calO_K$. A possible isomorphism $\Theta:\calO_K/\frp\rightarrow \BF_p$ is given by $\Theta(a+b\sqrt{-D})=a+bd\bmod p$ (when $-D\equiv2,3\bmod4$) or $\Theta(\frac{a+b\sqrt{-D}}{2})=(a+bd)\overline{2}\bmod p$ (when $-D\equiv1\bmod4$).

Let $\psi$ be a Hecke character of conductor $\frp$ and weight $r$. Then $\psi$ is given by,
\begin{equation*}
\begin{split}
\psi(a+b\sqrt{-D})&=\chi(a+bd)\left(\frac{a+b\sqrt{-D}}{\sqrt{a^2+b^2D}}\right)^r, \quad \textit{ when } -D\equiv2,3\bmod4\\
\psi\left(\frac{a+b\sqrt{-D}}{2}\right)&=\chi(a+bd)\overline{\chi}(2)\left(\frac{a+b\sqrt{-D}}{\sqrt{a^2+b^2D}}\right)^r, \quad \textit{ when } -D\equiv1\bmod4.
\end{split}
\end{equation*}
where $\chi$ is a primitive Dirichlet character $\bmod p$ and $\psi(\alpha)=1$ for $\alpha\in\mu_K$.

The $L$-function associated with the Hecke character $\psi$ is given by
\begin{equation*}
L(s,\psi)=\underset{\begin{subarray}{c}\fra \textit{ is integal }\\ \textit{ coprime with } \frp\end{subarray}}\sum\frac{\psi(\fra)}{(\calN\fra)^s} \quad \textit{ for }Re(s)>1.
\end{equation*}
Hecke gave a functional equation for this $L$-function and extended it meromorphically to all of the complex plane. %This can be broken up into ideal classes $L(s,\psi)=\sum_{[c]\in\mathfrak{h}}L_c(s,\psi)$, where
%\begin{equation}\label{class}
%L_c(s,\psi) = \sum_{\fra\in[c]}\frac{\psi(\fra)}{N\fra}=\frac{\chi(p)}{2\psi(\frp_c)}\sum_{m\equiv n\mod2}\frac{\chi(m/2)4^s}{(m^2+n^2D)^s}\left(\frac{m+n\sqrt{-D}}{\sqrt{m^2+n^2D}}\right)^r.
%\end{equation}
Phragmen-\Lind principle implies that $L(1/2+it,\psi)\ll_{\varepsilon}(1+|t|)^{1/2+\varepsilon}p^{1/4+\varepsilon}$- the convexity bound. The aim of this paper is to prove the following result.

\begin{thm}\label{mainthm}
Let $K=\BQ(\sqrt{-D})$ be an imaginary number field with $D>0$ squarefree. Let $p$ be an odd split prime, $p=\frp\frp'$. Let $\psi$ be a Hecke character with weight $r>0$ and conductor prime $\frp$, and $L(s,\psi)$ be the associated $L$-function. Then we have
\begin{equation*}
L(1/2+it,\psi)\ll_{D,\varepsilon} (1+|t|)^{3/8+\varepsilon}p^{1/8+\varepsilon}
\end{equation*}
for $p\ll t$.
\end{thm}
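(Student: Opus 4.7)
The plan is to carry out a Munshi-style $\delta$-method analysis of the $L$-function. Identify $L(s,\psi)$ with the standard $L$-function of a CM holomorphic cusp form $f_\psi$ of weight $r+1$, whose Fourier coefficients $\lambda(n)=\sum_{\calN\fa=n,\,(\fa,\frp)=1}\psi(\fa)$ drive the Dirichlet series. A smooth approximate functional equation then writes
\[
L(1/2+it,\psi)\;\ll\;\sup_{N\,\ll\,|t|\sqrt{p}}\frac{|S(N)|}{\sqrt{N}},\qquad S(N)=\sum_{n}\lambda(n)\,n^{-it}\,V(n/N),
\]
reflecting the analytic conductor $C(\psi)\asymp |t|^2 p$. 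The target bound is equivalent to saving a factor of $(|t|p)^{1/8-\varepsilon}$ over the trivial $S(N)\ll N$.

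Next, detach $\lambda(n)$ from the Archimedean oscillation by inserting a dummy variable $m\sim N$ and the DFI $\delta$-symbol
\[
\delta(m-n)=\frac{1}{Q}\sum_{q\le Q}\frac{1}{q}\sum_{a\bmod q}^{*}e\!\left(\tfrac{a(m-n)}{q}\right)\!\int g(q,u)\,e\!\left(\tfrac{(m-n)u}{qQ}\right)du,
\]
with $Q$ to be optimised at the end. On the $n$-sum apply Voronoi summation for $f_\psi$ (the formula developed in Appendix A; for $(q,p)=1$ the Kowalski--Michel--Vanderkam version already suffices), replacing the length-$N$ sum by a dual sum of length $\asymp q^2/N$ carrying a Kloosterman factor modulo $q$ twisted by $\chi\bmod p$ and a $J$-Bessel-type weight. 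On the $m$-sum apply Poisson summation; stationary-phase analysis of the resulting oscillatory integral (with phases $m^{-it}$ and $e(mu/qQ)$) produces a dual variable of length $\asymp |t|q/Q$, and the hypothesis $p\ll|t|$ ensures the stationary point is non-degenerate.

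The third step is Cauchy--Schwarz in the outer variable $q$ together with the dual $m$-variable; opening the square leaves a correlation sum of two Voronoi outputs, to moduli $q$ and $q'$, twisted by $\chi\bar\chi\bmod p$. Complete the character sum in the remaining dual $n$-variable by Poisson, separate the diagonal (which contributes the main term), and bound the off-diagonal by Weil's estimate for twisted Kloosterman sums to modulus $qq'p$. Optimising $Q\asymp (N/p)^{1/2}\cdot p^{1/4}$ balances the diagonal against the Weil-type off-diagonal contribution and produces the advertised hybrid saving $(|t|p)^{-1/8}$, yielding the stated bound.

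The chief technical obstacle is the interaction between the arithmetic twist and the Archimedean oscillation after Voronoi: the $J$-Bessel kernel of weight $r+1$ convolved with the stationary phase of $m^{-it}$ produces a doubly oscillatory integral whose asymptotic must be uniform in $q$, $u$, and $t$. A secondary subtlety is that the Dirichlet character $\chi\bmod p$ carried through Voronoi must survive Cauchy--Schwarz and Poisson completion to enter the Weil bound with the correct modulus $qq'p$; it is precisely this surviving character that converts the conductor-trivial contribution into the $p^{1/8}$ factor. The hypothesis $p\ll|t|$ keeps the problem in the regime where the $t$-aspect supplies the dominant oscillation and the $\delta$-method is well-balanced, so that no further amplification is needed beyond the single Cauchy--Schwarz step.
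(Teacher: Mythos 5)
There is a genuine gap: your plan omits the conductor-lowering device that the whole argument hinges on. You insert a plain DFI $\delta$-symbol for $\delta(m-n)$ with moduli $q\le Q$ and then go straight to Voronoi/Poisson/Cauchy. The paper instead detects the two conditions $\delta\big(\tfrac{n-m}{p}=0\big)\delta(n\equiv m\bmod p)$ (so the effective arithmetic modulus becomes $qp$, which is what lets $Q$ drop to $\sqrt{N/pK}$ and produces the level-aspect saving), and, crucially, inserts an extra integral $\frac1K\int n^{iv}m^{-iv}V(v/K)\,dv$ that lowers the conductor in the $t$-aspect. Without the $v$-integral the bookkeeping lands exactly at convexity: after Poisson on $m$ one saves $N/t^{1/2}$, after Voronoi on $n$ the dual sum has length $\asymp Q^2p^2/N$ and one saves $Qp/K^{1/2}$, and the final Cauchy--Poisson step only beats convexity because the diagonal saving $N^{1/2}/K^{1/2}p^{1/4}$ and off-diagonal saving $K^{1/4}p^{1/4}$ can be balanced by choosing $K=(N/p)^{2/3}$; setting $K=1$ (your situation) gives no net gain. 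The paper states this explicitly in its remarks. So the proposal as written would not produce any subconvex exponent, let alone the Burgess-quality $3/8$; you need to build the parameters $K$ and the mod-$p$ splitting into the $\delta$-symbol from the start, and your proposed optimisation $Q\asymp(N/p)^{1/2}p^{1/4}$ is not the right shape ($Q=\sqrt{N/pK}$ is forced by the Voronoi dual length).

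A secondary discrepancy: you bound the off-diagonal after the final Poisson by ``Weil's estimate for twisted Kloosterman sums to modulus $qq'p$.'' In the paper no Kloosterman sums arise at that stage; the additive characters $e\big(-n\overline{m4_qD_q}/(qp^{1-\delta})\big)$ collapse under Poisson into a congruence condition $n\equiv \overline{m4_qD_q}q'-\overline{m'4_{q'}D_{q'}}q \bmod qq'p^{1-\delta}$, and the gain comes from elementary counting of $(q,m,q',m')$ satisfying it together with a two-dimensional second-derivative (stationary phase) bound on the $\tau,\tau'$-integral. Invoking Weil here is not obviously wrong in spirit, but it is not what closes the argument, and it does not substitute for the missing factor of $K$ in the savings ledger.
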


The first such result was obtained by Kaufman \cite{kaufman79} and Sohne \cite{sohne97}, who obtained
\begin{equation*}
L(1/2+it,\psi)\ll (1+|t|)^{d/6+\varepsilon}.
\end{equation*}
Here $\psi$ is a Hecke character of an arbitrary number field $K$ and $d$ is the degree of $K/\BQ$. Sohne actually got a stronger result which implies subconvexity in level aspect when the conductor of $\psi$ has a small factor. Diaconu and Garrett \cite{diga10} proved a $t$-aspect subconvexity bound for $L$-functions attached to cuspforms $f$ for $GL_2(K)$. Their argument uses asymptotics with error term with a power
saving for second integral moments over spectral families of twists $L(s,f\otimes\psi)$ by Hecke characters $\psi$. Michel and Venkatesh \cite{mive10} used spectral theory and gave a subconvexity bound uniformly in all aspects for $L$-functions attached to cusp forms $f$ for $GL_2(K)$ and their $GL(1)$ twists $L(s,f\otimes\psi)$. Wu \cite{wu16} followed \cite{mive10} and used an amplification technique to get a subconvexity bound uniformly in both conductor and $t$-aspect,
\begin{equation*}
L(1/2+it,\psi)\ll_\varepsilon \CQ^{1/4-(1-2\theta)/16+\varepsilon}.
\end{equation*}
Here $\theta$ is any bound towards the Ramanujan-Petersson conjecture. Very recently, Booker, Milinovich and Ng \cite{boming17} proved a $t$-aspect subconvexity bound for $L$-functions of cusp forms $f\in S_k(\Gamma_1(N))$,
\begin{equation*}
L(1/2+it,f)\ll (1+|t|)^{1/3}\log|t|.
\end{equation*}
They thus obtain a bound of strength comparable to Good's bound for the full modular group. A key innovation in their proof is a general form of Voronoi summation that applies to all fractions, even when the level is not squarefree.

We also require a Voronoi summation formula for any fraction and any level. We worked in parallel to find a way of proof of Voronoi summation formula for holomorphic cusp forms with complex multiplication in $S_k(\Gamma_1(N))$. This is discussed in Appendix A. This also happens to be the key innovation in our proof. Though our bound is not as strong as that of \cite{boming17}, we hope to be able to achieve that by more careful analysis as done in \cite{singh17}. Moreover, our result gives a hybrid bound in level aspect. We must also note that our method of proof utilizes the algebraic structure of number fields, and we are hopeful to extend this Voronoi type formula to arbitrary number fields. In particular, by following our ideas, one can get a Voronoi formula for the Hecke-Maass cusp forms attached to Hecke charaters of real quadratic fields with arbitrary additive twists. In our knowledge, a Voronoi formula in such generality is yet unknown.

The proof of Theorem \ref{mainthm} closely follows the method of proof for $t$-aspect subconvexity for $GL(2)$ $L$-functions of full level \cite{ka17}. We have to additionally care about the level and nebentypus.

We shall use Kloosterman's version of the circle method to detect when an integer $n$ equals $0$. Let $Q>0$ be any real number. We have,
\begin{equation}\label{delta}
\delta(n=0)=2Re\int_0^1\underset{1\leq q\leq Q<a\leq Q}{ \sum \sideset{}{^*}\sum} \frac{1}{aq}e\left(\frac{n\overline{a}}{q}-\frac{nx}{aq}\right)dx
\end{equation}
for $n\in\BZ$. Here $e(.)=e^{2\pi i.}$ and the $*$ on the inner sum means that $(a,q)=1$. $\overline{a}$ is the multiplicative inverse of $a\mod q$. The application of the circle method will not be sufficient, and we will apply a conductor lowering trick in both the $t$ and the level aspect.

By approximate functional equation,
\begin{equation*}
L(1/2+it,\psi)\ll_{D,\varepsilon}\sup_{N\ll X^{1+\varepsilon}}\frac{S(N)}{N^{1/2}} + O((tp)^{-2017})
\end{equation*}

%Let $\Lambda(n)=\underset{\begin{subarray}{c} \fra\subset\mathcal{O}\\ N\fra=n\end{subarray}}{\sum}\psi(\fra)$. Note that $|\Lambda(n)|\leq\tau(n)$. The Fourier series
%\begin{equation*}
%f(z) = \sum_{n\geq1}\Lambda(n)n^{r/2}e(nz)
%\end{equation*}
%gives a primitive cusp form of weight $k=r+1$, level $D^2$ and nebentypus character $\chi\chi_D$ i.e. $f\in S_k(\Gamma_0(D^2),\chi\chi_D)$. Therefore out proof of the subconvex bound would be parallel to the proof for holomorphic cusp forms of full level as in \cite{ka17}.
where 
\begin{equation}\label{sn}
S(N):= \sum_{\fra\subset\calO_K} \psi(\fra) (\calN\fra)^{-it} V\left(\frac{\calN\fra}{N}\right).
\end{equation}
%where
%\begin{equation}
%\lambda(n)=\sum_{\begin{subarray}{c} c\equiv f\mod 2\\ c^2+f^2D=4n\end{subarray}}\chi\bigg(\frac{c-df}{2}\bigg)\left(\frac{c+f\sqrt{-D}}{\sqrt{c^2+f^2D}}\right)^r.
%\end{equation}
It therefore suffices to estimate $S(N)$. Let $\lambda(n)=\sum_{\calN\fra=n}\psi(\fra)$. We follow Munshi's approach and write (\ref{sn}) as
\begin{equation}\label{start}
S(N) = \sum_{n\geq 1}\sum_{m\geq 1}\lambda(n)m^{-it}V\left(\frac{n}{N}\right)U\left(\frac{m}{N}\right)\delta\left(\frac{n-m}{p}=0\right)\delta(n\equiv m\mod p).
\end{equation}
We use Kloosterman's delta method with Munshi's modification \cite{mu15} to detect equality. Then $S(N)=S^+(N)+S^-(N)$ where
\begin{equation}
\begin{split}
S^{\pm}(N) = \frac{1}{K}\int_0^1\int_{\BR}\frac{1}{p}\sum_{\alpha\mod p}\underset{1\leq q<Q<a\leq q+Q}{\sum \sideset{}{^*}\sum}\frac{1}{aq}\sum_{n\geq1}\sum_{m\geq1}&\lambda(n)n^{iv}m^{-i(t+v)} e\left(\pm\frac{(n-m)\bar{a}}{qp}\mp\frac{(n-m)x}{aqp}\right) \\&e\bigg(\frac{(n-m)\alpha}{p}\bigg)V\left(\frac{v}{K}\right)V\left(\frac{n}{N}\right)U\left(\frac{m}{N}\right)dv dx
\end{split}
\end{equation}
Here, $\bar{a}$ is the unique multiplicative inverse of $a(\mod q)$ inside the interval $(Q,q+Q]$ and $a$ need not be coprime with $p$. $V$ and $U$ are smooth functions supported on $[1,2]$ and $[1/2,5/2]$ respectively. $U$ is equal to $1$ on $[1,2]$. Due to the symmetry between $S^+(N)$ and $S^-(N)$, the estimates on both are exactly the same modulo some change of signs. Therefore we only consider $S^+(N)$. Separating the $n$ and $m$-sums,
\begin{equation}\label{start1}
\begin{split}
S^+(N)= \frac{1}{K}\int_0^1\int_{\BR}&\frac{1}{p}\sum_{\alpha\mod p}\underset{1\leq q<Q<a\leq q+Q}{\sum \sideset{}{^*}\sum}\frac{1}{aq}\sum_{n\geq1}\lambda(n)n^{iv}V\left(\frac{n}{N}\right) e\left(\frac{n\bar{a}}{qp} - \frac{nx}{aqp}\right)e\bigg(\frac{n\alpha}{p}\bigg)V\left(\frac{v}{K}\right) \\ & \sum_{m\geq1}m^{-i(t+v)}e\left(\frac{-m\bar{a}}{qp} + \frac{mx}{aqp}\right)e\bigg(\frac{-m\alpha}{p}\bigg) U\left(\frac{m}{N}\right) dv dx
\end{split}
\end{equation}

It will turn out that the optimal choice of $Q$ is $\sqrt{N/pK}$ (and thus lowering the conductor by $(Kp)^{1/2}$). 

We will take 
\begin{equation}
t^{3/4}p^{1/4} \ll N<t^{1+\varepsilon}p^{1/2}, p<t \textit{ and } (N/p)^{1/2}\leq K\ll N^{1-\varepsilon}
\end{equation}
In this range, we will establish the following bound.

\begin{prop}\label{mainprop}
For $p<t$ and $t^{3/4}p^{1/4} \ll N<t^{1+\epsilon}p^{1/2}$, we have
\begin{equation}
\frac{S^+(N)}{N^{1/2}}\ll t^{1/2+\varepsilon}p^{1/4}\left(\frac{K^{1/2}p^{1/4}}{N^{1/2}}+\frac{1}{K^{1/4}p^{1/4}}\right).
\end{equation}
\end{prop}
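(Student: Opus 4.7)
The plan is to execute the Munshi-type dualisation scheme used in the author's earlier paper \cite{ka17}, adapted to accommodate the prime level $p$ and the nebentypus $\chi$. First, I would apply the Voronoi summation formula of Appendix A to the inner $n$-sum in (\ref{start1}). Since $(q,p)=1$, the effective modulus is $qp$, and Voronoi replaces the coefficient $\lambda(n)$ by its dual, weighted by a Gauss sum attached to $\psi$, while the smooth weight $V(n/N)n^{iv}e(-nx/(aqp))$ is transformed into a Bessel-type integral kernel. Stationary phase on that kernel, together with the archimedean twist $n^{iv}$ of size $v\sim K$, localises the dual variable $\tilde n$ to a window whose size is governed by $q$, $p$, $K$ and $N$.

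Next, I would apply Poisson summation of modulus $qp$ to the $m$-sum. The oscillatory factor $m^{-i(t+v)}$ combined with the additive character produces a dual sum over $\tilde m+q\alpha$, localised by stationary phase to a range determined by $qp(t+v)/N$. At this stage the $x$-integral becomes a one-dimensional oscillatory integral whose saddle enforces a linear relation between $\tilde n$, $\tilde m$, $\alpha$ and the saddle point of the $n$-side; this is the place where the conductor-lowering choice $Q=(N/pK)^{1/2}$ pays off, as it makes the arithmetic and archimedean conductors balance. The $v$-integral then behaves as a Gaussian of width $K^{-1}$ centred at a point determined by the remaining arithmetic data, yielding the expected $K^{-1/2}$ saving.

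After these manipulations, $S^+(N)$ reduces up to negligible error to an expression of the schematic form
\begin{equation*}
\frac{N^{1/2}}{K^{1/2}}\sum_{q\sim Q}\frac{1}{(qp)^{1/2}}\sum_{\tilde n\ll N_0}\lambda(\tilde n)\,\mathcal{C}(\tilde n;q),
\end{equation*}
where $\mathcal{C}(\tilde n;q)$ packages the surviving $a$-, $\alpha$- and $\tilde m$-sums together with an oscillatory factor of modulus one. I would then apply Cauchy-Schwarz in $\tilde n$, smoothed by a bump function, and invoke the Rankin-Selberg bound $\sum_{\tilde n\ll X}|\lambda(\tilde n)|^2\ll_{D,\varepsilon}X^{1+\varepsilon}$ to dispose of the Hecke eigenvalues. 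Opening the square and applying Poisson summation in $\tilde n$ modulo the combined denominator produces a zero-frequency/diagonal contribution yielding the first term $K^{1/2}p^{1/4}/N^{1/2}$, while the non-zero frequencies, bounded via square-root cancellation in a mixed character sum modulo $p$ and a Kloosterman-type sum modulo $q_1q_2$, furnish the second term $(K^{1/4}p^{1/4})^{-1}$.

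The main obstacle is the analysis of the character sum produced by the final Poisson step: it mixes the conductor-lowering additive parameter $\alpha$, the nebentypus $\chi$, and the Gauss sum attached to $\psi$ produced by Voronoi. The Voronoi Gauss sum depends on the splitting $(p)=\frp\frp'$ in $\calO_K$ rather than on $p$ alone, and checking that the combined sum still enjoys the expected square-root cancellation uniformly in the remaining parameters is the most delicate ingredient. It is precisely this step that produces the factor $p^{1/4}$ (rather than $p^{1/2}$) in both pieces of the bound, and so forces the estimate to be genuinely hybrid in conductor and $t$-aspect.
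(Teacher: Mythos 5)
Your overall architecture (dual summation on both the $m$- and $n$-sums, stationary phase in $v$, then Cauchy--Schwarz in the dual variable followed by Poisson) matches the paper's, and the reversed order of the two dual summations is immaterial since the $m$- and $n$-sums are already separated in (\ref{start1}). The diagonal accounting is also consistent with the paper. However, there is a genuine gap in your treatment of the non-zero frequencies after the final Poisson step, which is the decisive part of the argument. You propose to bound them ``via square-root cancellation in a mixed character sum modulo $p$ and a Kloosterman-type sum modulo $q_1q_2$.'' The paper does not use, and does not need, any cancellation in these arithmetic sums: after opening the square and applying Poisson in the dual variable $n$, the congruence $n\equiv\overline{m4_qD_q}q'-\overline{m'4_{q'}D_{q'}}q\bmod qq'p^{1-\delta}$ is exploited only through \emph{counting} the admissible tuples $(q,q',m,m')$ for each fixed $n$ (the nebentypus factors are estimated trivially in absolute value). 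The missing saving is instead extracted from the archimedean side: the double $\tau,\tau'$-integral $\mathfrak{K}$ left over from the Voronoi contour (localised to dyadic ranges $[J,4J/3]$) is treated with the two-dimensional second-derivative bound of Lemma \ref{2nd.der.lem}, with $r_1=r_2=J^{-1/2}$ and $\mathrm{var}(g)\ll t^{-1+\varepsilon}$, producing the bound $B^*(C,n)\ll K^{-3/2}t^{-1}N^{1/2}(|n|R)^{-1/2}p^{-\delta/2}t^{\varepsilon}$. Your proposal omits this $\tau,\tau'$-analysis entirely, and it is not clear that the character-sum cancellation you invoke is available (the relevant sums are constrained by the congruence and weighted by the oscillatory integrals, so the variables are not free), nor that it would yield the right exponents even if it were.

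A second, smaller inaccuracy: you attribute the factor $p^{1/4}$ in the bound to the delicate behaviour of the Voronoi Gauss sum attached to $\psi$. In fact $t^{1/2+\varepsilon}p^{1/4}$ is simply the convexity normalisation; the genuinely $p$-dependent savings enter through the conductor-lowering congruence $n\equiv m\bmod p$ built into (\ref{start}), which places the modulus $qq'p^{1-\delta}$ in the final Poisson step and lengthens the off-diagonal range by a factor involving $p$. You would also need to track the case distinction $\delta\in\{0,1\}$ according to whether $p\mid m$, and the resulting two shapes $L_1,L_2$ of the Voronoi formula, since these change both the modulus of the dual additive twist and the length of the dual $n$-sum; your sketch treats the Voronoi dualisation as a single uniform transformation.
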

Same bound holds for $S^-(N)$, and consequently for $S(N)$. The optimal choice of $K$ is therefore $K=(N/p)^{2/3}$. With this choice of $K$, $S(N)/N^{1/2}\ll t^{1/2}p^{1/6}/N^{1/6}$. For $N\ll t^{3/4}p^{1/4}$, the trivial bound $S(N)\ll Nt^{\varepsilon}$ is sufficient. This follows by applying Cauchy's inequality to the $n$-sum followed by Lemma \ref{ramonavg} (Ramanujan bound on average). Theorem \ref{mainthm} then follows from Lemma \ref{ramonavg} and Proposition \ref{mainprop}.

Our writing style is expository and we shall justify our approach in various remarks.

\subsection{Proof Sketch of Theorem \ref{mainthm}}
We briefly explain the steps of the proof and provide heuristics in this subsection. The calculations are parallel to our previous work \cite{ka17}. The circle method is used to separate the sums on $n$ and $m$, and we arrive at (\ref{start1}). Trivial estimate gives $S(N)\ll N^{2+\varepsilon}$. For simplicity let $q\asymp Q$. We are required to save $N$ and a little more in a sum of the form
\begin{equation*}
\int_K^{2K}\frac{1}{p}\sum_{b\mod p}\sum_{q\asymp Q}\quad\sumx_{Q<a\leq Q+q}\sum_{n\asymp N}\lambda(n)n^{iv}e\left(\frac{n\overline{a}}{qp}-\frac{nx}{aqp}+\frac{nb}{p}\right)\sum_{m\asymp N}m^{-i(t+v)}e\left(\frac{-m\overline{a}}{qp}+\frac{mx}{aqp}-\frac{mb}{p}\right)dv.
\end{equation*}
The sum over $m$ has `conductor' $Qpt\asymp N^{1/2}p^{1/2}t/K^{1/2}$. Roughly speaking, the conductor takes into account the arithmetic modulus $qp$, with the size $=(t+v)$ of oscillation of the analytic weight. If we assume $K\ll t^{1-\varepsilon}$, then the size of the oscillation is $t$, so the extra oscillation of $m^{-iv}$ does not hurt us here. Poisson summation changes the length of summation to $Qpt/N$, and contributes a factor of $N$ along with a congruence condition mod $qp$ and an oscillatory integral. The oscillatory integral saves us $t^{1/2}$. In all, we will save $N/t^{1/2}$ in this step. So far the saving is independent of $K$. The next step is to apply Voronoi summation to the $n$-sum. We need to save $t^{1/2}$ in a sum of the form
\begin{equation*}
\int_K^{2K}\sum_{q\asymp Q}\underset{\begin{subarray}{c}(m,q)=1\\|m|\ll Qpt/N\end{subarray}}{\sum}\left(\frac{(t+v)aqp}{(x-ma)}\right)^{-i(t+v)}\sum_{n\asymp N}\lambda(n)e\left(\frac{nm}{qp}\right)n^{iv}e\left(-\frac{nx}{aqp}\right)dv,
\end{equation*}
where $a$ is the unique multiplicative inverse of $m\mod q$ in the range $(Q,q+Q]$. Since the $n$-sum involves $GL(2)$ Fourier coefficients, the `conductor' for the $n$-sum would be $(QpK)^2$. The new length of sum would be $(QpK)^2/N\asymp Kp$. Voronoi summation would contribute a factor of $N/qp$, a dual additive twist and an oscillatory weight function. The oscillation in the weight function would save us $K^{1/2}$. In all, we will save $Qp/K^{1/2}=N^{1/2}p^{1/2}/K$. If $K$ is large, we are actually making the bound worse. We are therefore left to save $t^{1/2}K/p^{1/2}N^{1/2}$ in $S(N)$. Using stationary phase analysis, we will be able to save $K^{1/2}$ in the integral over $v$. At this point, $K$ seems to be hurting more than helping. The final step is to get rid of the $GL(2)$ oscillations by using the Cauchy inequality and then changing the structure using Poisson summation. After Cauchy, the sum roughly looks like
\begin{equation*}
\bigg[\sum_{n\ll Kp}\bigg|\underset{\begin{subarray}{c}q\asymp Q\\ |m|\asymp Qpt/N\\(m,q)=1\end{subarray}}{\sum}e\left(-\frac{nm}{qp}\right)\int_{-K}^Kn^{-i\tau}g(q,m,\tau)d\tau\bigg|^2\bigg]^{1/2}
\end{equation*}
where $g(q,m,\tau)$ is an oscillatory weight function of size $O(1)$. The next steps would be to open the absolute value squared and, apply Poisson to the $n$-sum and analyze the $\tau$-integral. The $\tau$-integral gives us a saving of $K^{1/2}$. After Cauchy and Poisson summation, we will save $N^{1/2}/K^{1/2}$ in the diagonal term and $K^{1/4}p^{1/2}$ in the off-diagonal term. The saving over the convexity bound in the diagonal terms is $N^{1/2}/K^{1/2}p^{1/4}$. The saving over the convexity bound from the off-diagonal terms is $K^{1/4}p^{1/4}$. We will therefore get maximum saving when $N^{1/2}/K^{1/2}p^{1/4}=K^{1/4}p^{1/4}$, that is $K=(N/p)^{2/3}$. This gives us a saving of $N^{1/6}/p^{1/12}$ over the $t$- aspect convexity bound of $t^{1/2+\varepsilon}$. Matching this with the trivial bound $N^{1/2}$ for $N\ll t^{3/4}p^{1/4}$ gives us the Burgess-type bound in the $t$-aspect and a subconvex bound in the level aspect for $p<t$.

\section{$GL(2)$ Voronoi formula and Stationary Phase Method}

Let $g\in S_k^{CM}(\Gamma_0(L),\chi_{L_0})$ be a CM holomorphic cusp form of weight $k$ (an integer) and level $L$. Let $\lambda(n)$ be the Fourier coefficients of $g$. The nebentypus $\chi_{L_0}$ is induced from a character of level $L_0$ dividing $L$. For coprime integers $m$ and $q$, let $L_1=(L,q)$, $L_2=L/L_1$ and $L_3=(L_1,L_2)$ be coprime with $L_0$. Let $F$ be a smooth function compactly supported on $(0,\infty)$, and let $\tilde{F}(s)=\int_0^\infty F(x)x^{s-1}dx$ be its Mellin transform. An application of the functional equation of $L(s,f)$, followed by unwinding the integral and shifting the contour gives the Voronoi summation formula \cite{kmv}.

\begin{lem}\label{voronoi}
When $(L_1,L_2)=1$,
\begin{equation}
\begin{split}
\sum_{n\geq1}\lambda(n)e\left(n\frac{m}{q}\right)F(n)=&\frac{1}{q}\chi_{L_1}(-\overline{m})\chi_{L_2}(-q)\frac{\eta(L_2)}{\sqrt{L_2}}\sum_{n\geq1}\lambda_{L_2}(n)e\left(-n\frac{\overline{mL_2}}{q}\right)\\&\times\int_0^\infty F(x)\left[2\pi i^kJ_{k-1}\left(\frac{4\pi\sqrt{nx}}{q\sqrt{L_2}}\right)\right]dx.
\end{split}
\end{equation}
By arguments given in Appendix A, the Voronoi formula in the case $(L_1,L_2)=2$ is not much different. Let $q_1=q/2$. Then,
\begin{equation}
\begin{split}
\sum_{n\geq1}\lambda(n)e\left(n\frac{m}{q}\right)F(n)=&\frac{1}{q}\chi_{L_1}(-\overline{m})\chi_{L_2}(-q_1)\frac{\eta(L_2)}{\sqrt{L_2}}\sum_{n\geq1}\lambda_{L_2}(n)e\left(-n\frac{\overline{2mL_2}}{q_1}\right)\\&\times\int_0^\infty F(x)\left[2\pi i^kJ_{k-1}\left(\frac{4\pi\sqrt{nx}}{q\sqrt{L_2}}\right)\right]dx.
\end{split}
\end{equation}
\end{lem}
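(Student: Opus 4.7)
The plan is to derive both identities from the functional equation of the additively twisted $L$-function $L(s, g, m/q) := \sum_{n \geq 1} \lambda(n) e(nm/q) n^{-s}$ via Mellin inversion and contour shift. By Mellin inversion applied to $F$,
\begin{equation*}
\sum_{n \geq 1} \lambda(n) e(nm/q) F(n) = \frac{1}{2\pi i} \int_{(\sigma)} \tilde{F}(s)\, L(s, g, m/q)\, ds
\end{equation*}
for $\sigma > 1$, the series being absolutely convergent by Ramanujan on average for $\lambda$. The right-hand side will then be evaluated by shifting the contour to $\mathrm{Re}(s) = -\sigma$, crossing no poles since $g$ is cuspidal, and invoking the functional equation for the additive twist.

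Next, assuming first that $(L_1, L_2) = 1$, I would split $m/q$ via the Chinese Remainder Theorem with respect to the factorization of $L$ relative to $q$: the component at primes dividing $L_1 \mid q$ produces the $\chi_{L_1}(-\overline{m})$ factor (a twist by the nebentypus component of conductor dividing $L_1$), while the component at primes dividing $L_2$ is coprime to $q$ and yields, after the standard Gauss-sum calculation for the level-$L$ twisted $L$-function, the dual additive twist $e(-n\overline{mL_2}/q)$ together with the root-number factor $\chi_{L_2}(-q)\eta(L_2)/\sqrt{L_2}$ and the $1/q$ normalisation. Writing out the shifted integral and recognising the Mellin--Barnes representation
\begin{equation*}
\frac{1}{2\pi i} \int_{(-\sigma)} \tilde{F}(s)\, \gamma(s)\, \left(\frac{q\sqrt{L_2}}{4\pi\sqrt{n}}\right)^{2s}\, ds \;=\; \int_0^\infty F(x)\,\cdot\, 2\pi i^k\, J_{k-1}\!\left(\frac{4\pi\sqrt{nx}}{q\sqrt{L_2}}\right) dx,
\end{equation*}
where $\gamma(s)$ is the ratio of archimedean gamma factors from the functional equation, completes the first identity.

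The genuinely delicate case is $(L_1, L_2) = 2$, and I expect this to be the main obstacle. The Chinese Remainder Theorem splitting of $m/q$ into pairwise coprime local pieces breaks down at the prime $2$, so the Gauss-sum factorisation must be redone there. This is precisely where the Appendix~A lattice argument comes in: writing $\lambda(n) = \sum_{\calN \fra = n} \psi(\fra)$ for ideals $\fra \subset \calO_K$, the additive character $e(nm/q)$ decomposes across the lattice $\calO_K$ into local characters, and the local component at $2$ becomes an explicit quadratic Gauss sum of conductor $4$. This local Gauss sum accounts for both modifications in the conclusion: the replacement of $\chi_{L_2}(-q)$ by $\chi_{L_2}(-q_1)$ and of $\overline{m} \bmod q$ by $\overline{2m} \bmod q_1$, with $q_1 = q/2$. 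The archimedean side and the Bessel kernel are unchanged because the functional equation at infinity does not see the split at $2$, so the same contour shift yields the second identity. The whole argument therefore reduces to a prime-by-prime verification whose only substantive new calculation is the explicit evaluation of the quadratic Gauss sum of conductor $4$ provided by the Appendix~A framework.
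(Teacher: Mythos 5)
Your plan for the case $(L_1,L_2)=1$ is exactly what the paper does: it does not reprove the formula but invokes \cite{kmv}, describing the derivation precisely as ``an application of the functional equation of $L(s,f)$, followed by unwinding the integral and shifting the contour,'' which is your Mellin-inversion-plus-contour-shift argument. One caveat: the factorization of the twist into a nebentypus piece at $L_1$ and a dual additive twist at the part of $q$ coprime to $L$ is not a bare Chinese Remainder Theorem statement; it rests on the newform/Atkin--Lehner structure of $g$ (this is where the pseudo-eigenvalue $\eta(L_2)$ enters), so ``the standard Gauss-sum calculation'' is carrying real weight there, but since you are reproducing the \cite{kmv} result this is acceptable as a citation.

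The genuine gap is in the case $(L_1,L_2)=2$, and it is a gap you share with the paper rather than one you introduce. You assert that the conductor-$4$ quadratic Gauss sum ``accounts for both modifications'' ($\chi_{L_2}(-q)\mapsto\chi_{L_2}(-q_1)$ and $\overline{m}\bmod q\mapsto\overline{2m}\bmod q_1$), but you do not verify this, and neither does the paper: Appendix~A computes the arithmetic part $\calA$ and the analytic part $\CJ$ separately (its Lemma on Gauss sums covers the even-modulus cases $2\|c$ and $4\mid c$), but it explicitly stops short of assembling these into the displayed Voronoi formula, deferring the match with \cite{kmv,boming17} to ``the next version of the paper.'' In particular, the even-modulus Gauss sums in the appendix come with parity constraints ($2\nmid c,f$ when $2\|q$, $2\mid c,f$ when $4\mid q$) that must be reinterpreted as membership of $f+c\sqrt{-D}$ in the ramified prime above $2$, and the resulting sum over the sublattice must be recognized as a sum of Fourier coefficients $\lambda_{L_2}(n)$ with the precise dual modulus $q_1=q/2$; none of this bookkeeping is carried out either in your proposal or in the paper. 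So your proposal faithfully mirrors the paper's argument, including its unfinished step: as written, the second identity of the lemma is asserted, not proved.
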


For our calculations, we take a step back and use the following representation of $J_{k-1}$ as an inverse Mellin transform,
\begin{equation}
J_{k-1}(x)=\frac{1}{2}\frac{1}{2\pi i}\int_{(\sigma)}\left(\frac{x}{2}\right)^{-s}\frac{\Gamma(s/2+(k-1)/2)}{\Gamma(1-s/2+(k-1)/2)} \quad \textit{ for }0<\sigma<1.
\end{equation}

We would also need the following bound, which is the Ramanujan conjecture on average. It follows from standard properties of Rankin-Selberg $L$-functions and is well known.
\begin{lem}\label{ramonavg}
We have,
\begin{equation*}
\sum_{n\leq x}|\lambda(n)|^2\ll_{f,\varepsilon} x^{1+\varepsilon}.
\end{equation*}
\end{lem}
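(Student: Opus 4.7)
The plan is to study the Dirichlet series
$$D(s) = \sum_{n \geq 1} \frac{|\lambda(n)|^2}{n^s}$$
via its relation to the Rankin-Selberg convolution $L(s, g \otimes \bar g)$, and then extract a bound on the partial sums by contour integration. After the standard normalization $\lambda(n) \to \lambda(n)/n^{(k-1)/2}$ if needed, the series converges absolutely for $\Re s > 1$.

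First I would invoke the Rankin-Selberg theory of $g \otimes \bar g$. At every prime $p \nmid L$, a direct computation with the Satake parameters of $g$ produces the local identity
$$\bigl(1 - \chi_{L_0}(p)^2 p^{-2s}\bigr)\, L_p(s, g \otimes \bar g) = \sum_{r \geq 0} |\lambda(p^r)|^2 p^{-rs},$$
which globally gives
$$L(s, g \otimes \bar g) = \zeta^{(L)}\!\bigl(2s, \chi_{L_0}^2\bigr)\, D(s)$$
up to finitely many bad Euler factors that are irrelevant for the analytic structure. The key inputs are then the following classical properties: $L(s, g \otimes \bar g)$ admits meromorphic continuation to $\BC$ with a unique simple pole at $s=1$ (since $g$ is cuspidal, so the Petersson norm $\langle g, g\rangle \neq 0$), satisfies a functional equation $s \leftrightarrow 1-s$, and is of polynomial growth on vertical strips. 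Dividing by the factor $\zeta^{(L)}(2s,\chi_{L_0}^2)$, which is nonvanishing on $\Re s \geq 1/2 + \delta$, we conclude that $D(s)$ is holomorphic on $\Re s \geq 1$ apart from a simple pole at $s=1$, and is of polynomial growth along vertical lines.

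With this in hand I would apply Perron's formula: for $c = 1 + \varepsilon$ and $T = x$,
$$\sum_{n \leq x} |\lambda(n)|^2 = \frac{1}{2\pi i}\int_{c-iT}^{c+iT} D(s) \frac{x^s}{s}\, ds + O\!\left(\frac{x^{1+\varepsilon}}{T}\right).$$
The residue at $s=1$ contributes a main term of size $c_g x$, and the truncated integral is bounded by $x^{1+\varepsilon}$ using the polynomial growth of $D$ on the line $\Re s = 1+\varepsilon$. This yields the lemma, and in fact the sharper asymptotic $\sum_{n \leq x} |\lambda(n)|^2 \sim c_g x$. Equivalently, a direct application of the Wiener-Ikehara Tauberian theorem to $D(s)$ gives the same asymptotic with a shorter analytic input.

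The only potentially nontrivial ingredient is the analytic continuation, functional equation, and moderate vertical growth of $L(s, g \otimes \bar g)$. For holomorphic cusp forms with nebentypus on $\Gamma_0(L)$, these are classical results going back to Rankin and Selberg, so this step presents no real obstacle. The bound in the lemma is essentially folklore, which is why the excerpt states it without proof.
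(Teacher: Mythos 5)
Your proposal is correct and is exactly the argument the paper has in mind: the paper states the lemma without proof, citing only ``standard properties of Rankin--Selberg $L$-functions,'' and your Rankin--Selberg identity combined with Perron's formula (or Wiener--Ikehara) is the standard way to make that precise. (One cosmetic slip: for $g\otimes\bar g$ the local numerator is $1-|\chi_{L_0}(p)|^2p^{-2s}=1-p^{-2s}$, so the extracted factor should be $\zeta^{(L)}(2s)$ rather than $\zeta^{(L)}(2s,\chi_{L_0}^2)$, but this is immaterial since either factor is holomorphic and nonvanishing near $s=1$.)
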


We would need that stationary phase analysis done in \cite{munshi}. The contents of this section are given in Lemmas 2.3 - 2.5 of \cite{ka17}. For completeness, we mention those here.

\begin{lem}\label{stationary}
Suppose $f$ and $g$ are smooth real valued functions satisfying
\begin{equation} \label{spb}
f^{(i)}(x) \ll \Theta_f / \Omega_f^i, \quad g^{(j)}(x) \ll 1/\Omega_g^j
\end{equation}
for $i=2,3$ and $j=0,1,2$. Suppose $g(a)=g(b)=0$. Define
\begin{equation}\label{spi}
\mathcal{I} = \int_a^b g(x) e(f(x)) dx.
\end{equation}
\begin{enumerate}
\item Suppose $f'$ and $f''$ do not vanish in $[a,b]$. Let $\Lambda = \min_{[a,b]}|f'(x)|$. Then we have
\begin{equation} \label{SPB}
\mathcal{I} \ll \frac{\Theta_f}{\Omega_f^2\Lambda^3}\left(1 + \frac{\Omega_f}{\Omega_g} + \frac{\Omega_f^2}{\Omega_g^2}\frac{\Lambda}{\Theta_f/\Omega_f}\right).
\end{equation}
\item Suppose $f'$ changes sign from negative to positive at the unique point $x_0\in (a,b)$. Let $\kappa = \min\{b-x_0, x_0-a\}$. Further suppose that (\ref{spb}) holds for $i=4$ and 
\begin{equation}\label{spb1}
f^{(2)}(x) \gg \Theta_f/\Omega_f^2
\end{equation}
holds. Then
\begin{equation}\label{SP}
\mathcal{I} = \frac{g(x_0)e(f(x_0) + 1/8)}{\sqrt{f''(x_0)}} + O\left(\frac{\Omega_f^4}{\Theta_f^2\kappa^3} + \frac{\Omega_f}{\Theta_f^{3/2}} + \frac{\Omega_f^3}{\Theta_f^{3/2}\Omega_g^2}\right).
\end{equation}
\end{enumerate}
\end{lem}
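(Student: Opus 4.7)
The plan is to prove the two parts of Lemma \ref{stationary} by standard oscillatory integral techniques: repeated integration by parts for part (1), and a localize-and-Taylor-expand argument for part (2).

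For part (1), since there is no stationary point in $[a,b]$, I would write
\[
\mathcal{I}=-\frac{1}{2\pi i}\int_a^b e(f(x))\frac{d}{dx}\!\left(\frac{g(x)}{f'(x)}\right)dx,
\]
using $g(a)=g(b)=0$ to kill boundary terms. Iterating the integration by parts once more (again picking up no boundary contribution since each new amplitude inherits the vanishing from $g$), each step contributes a factor of order $1/\Lambda$ from dividing by $f'$, while derivatives distribute among $g$, giving factors of $1/\Omega_g$, and among $f'$, $f''$, $f'''$, giving factors of $\Theta_f/\Omega_f^i$. Estimating the three dominant combinations after two integrations by parts (both derivatives falling on the $f$-factor, one on each, both falling on $g$) and multiplying by the effective interval length controlled by $\Omega_f$ produces exactly the three terms bundled in (\ref{SPB}).

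For part (2), the plan is a dyadic split around $x_0$. Fix a parameter $\delta\leq\kappa$ to be optimized, and decompose $\mathcal{I}=\mathcal{I}_{\mathrm{near}}+\mathcal{I}_{\mathrm{far}}$ according to whether $|x-x_0|\leq\delta$ or $|x-x_0|>\delta$. On the far piece, the lower bound (\ref{spb1}) forces $|f'(x)|\gg |x-x_0|\,\Theta_f/\Omega_f^2$, so part (1) applied to a smooth dyadic decomposition of the far region yields an error of the shape $\Omega_f^4/(\Theta_f^2\kappa^3)$ once $\delta$ is chosen comparable to $\kappa$. On the near piece, Taylor expand
\[
f(x)=f(x_0)+\frac{1}{2}f''(x_0)(x-x_0)^2+O\!\left(\frac{\Theta_f}{\Omega_f^3}|x-x_0|^3\right),\qquad g(x)=g(x_0)+O\!\left(\frac{|x-x_0|}{\Omega_g}\right),
\]
change variables to $u=(x-x_0)\sqrt{|f''(x_0)|}$, and identify the leading contribution with the Fresnel integral
\[
\int_{-\infty}^{\infty}e(u^2/2)\,du=e(1/8),
\]
extended to all of the real line with controlled tail error. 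This produces the main term $g(x_0)e(f(x_0)+1/8)/\sqrt{f''(x_0)}$, while the cubic remainder in the phase, the linear correction in the amplitude, and the truncation tails outside $|u|\leq\delta\sqrt{|f''(x_0)|}$ combine to yield the remaining error terms in (\ref{SP}).

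The main obstacle is bookkeeping: one must choose $\delta$ so that the near-region Taylor errors and the far-region integration-by-parts errors balance, and verify that the three error terms stated in (\ref{SP}) dominate all intermediate contributions uniformly under the hypotheses (\ref{spb}) and (\ref{spb1}). Since Lemma \ref{stationary} is a quantitative repackaging of the classical stationary phase estimate used in \cite{munshi} and \cite{ka17}, no new analytic idea is required beyond careful size-tracking; I would simply reproduce that accounting.
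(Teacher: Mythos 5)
The paper offers no proof of this lemma at all: it is imported verbatim from Lemmas 2.3--2.5 of \cite{ka17}, which in turn quote the stationary phase estimates of \cite{munshi} (ultimately Huxley's exponential integral lemmas). So there is no in-paper argument to compare against; your outline is exactly the standard route those references take --- two integrations by parts for part (1), and a split at distance $\asymp\kappa$ from $x_0$ followed by Taylor expansion and a Fresnel integral for part (2) --- and the identification of where each of the error terms in (\ref{SPB}) and (\ref{SP}) comes from (in particular $\Omega_f^4/(\Theta_f^2\kappa^3)$ arising from part (1) applied off the stationary point with $\Lambda\gg\kappa\Theta_f/\Omega_f^2$) is correct.

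One step in your sketch is genuinely wrong as stated. In part (1) you claim the second integration by parts produces no boundary terms because ``each new amplitude inherits the vanishing from $g$.'' It does not: after the first integration by parts the amplitude is $\frac{d}{dx}(g/f')=g'/f'-gf''/f'^2$, and only the second summand vanishes at $a$ and $b$, since the hypothesis is $g(a)=g(b)=0$ with no condition on $g'$. The second integration by parts therefore produces boundary contributions of size $\ll 1/(\Omega_g\Lambda^2)$, and you must check that these are absorbed by the right-hand side of (\ref{SPB}); comparing with the term $\Theta_f/(\Omega_f\Omega_g\Lambda^3)$ this requires $\Lambda\Omega_f\ll\Theta_f$, which holds in the intended applications but is not among the hypotheses listed in Lemma \ref{stationary}. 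Relatedly, note that the normalizations $\Theta_f,\Omega_f\gg b-a$, which appear explicitly in the companion Lemma \ref{stationaryphase} and in Huxley's original statement, are silently needed in your bookkeeping for both parts (e.g.\ to bound the trivial length of the near region and to make the three displayed terms actually dominate all intermediate contributions); you should either add them as hypotheses or flag precisely where they enter. With those two repairs the argument closes.
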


We will also need a second derivative bound for integrals in two variables. Let
\begin{equation}
\mathcal{I}_{(2)}=\int_a^b\int_c^d g(x,y)e(f(x,y))dydx.
\end{equation}
with $f$ and $g$ smooth real valued functions. Let supp$(g)\subset(a,b)\times(c,d)$. Let $r_1, r_2$ be such that inside the support of the integral,
\begin{equation}\label{2nd.der.bound}
f^{(2,0)}(x,y)\gg r_1^2, \quad f^{(0,2)}(x,y)\gg r_2^2, \quad f^{(2,0)}(x,y)f^{(0,2)}(x,y)-\left[f^{(1,1)}(x,y)\right]^2\gg r_1^2r_2^2,
\end{equation}
where $f^{(i,j)}(x,y)=\frac{\partial^{i+j}}{\partial x^i\partial y^j}f(x,y)$. Then we have (see \cite{srini65}),
\begin{equation*}
\mathcal{I}_{(2)}\ll\frac{1}{r_1r_2}.
\end{equation*}
Define the total variance of $g$ by
\begin{equation*}
\textit{var}(g):=\int_a^b\int_c^d\left|\frac{\partial^2}{\partial x\partial y}g(x,y)\right|dydx.
\end{equation*}
Integration by parts along with the above bound gives us the following.
\begin{lem}\label{2nd.der.lem}
Suppose $f, g, r_1, r_2$ are as above and satisfy condition (\ref{2nd.der.bound}). Then we have
\begin{equation*}
\mathcal{I}_{(2)}\ll\frac{\textit{var}(g)}{r_1r_2}.
\end{equation*}
\end{lem}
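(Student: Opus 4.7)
\textbf{Proof proposal for Lemma \ref{2nd.der.lem}.}
The plan is to deduce the weighted estimate from the unweighted second derivative bound $\mathcal{I}_{(2)}\ll 1/(r_1r_2)$ quoted immediately above, by performing a double integration by parts that transfers both partial derivatives off the oscillating factor $e(f)$ and onto the amplitude $g$. The unweighted Srinivasan bound will not be applied to $\mathcal{I}_{(2)}$ itself, but to a family of truncated integrals, in order to produce a uniform pointwise bound on a suitable antiderivative.

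Concretely, I would introduce the double antiderivative
\begin{equation*}
G(u,v)\ :=\ \int_a^u\int_c^v e(f(x,y))\,dy\,dx,
\end{equation*}
which manifestly satisfies $\partial_u\partial_v G(u,v)=e(f(u,v))$. Because (\ref{2nd.der.bound}) is a pointwise lower bound on partials of $f$, it is inherited by every sub-rectangle $[a,u]\times[c,v]$; applying the unweighted second derivative test on each such sub-rectangle then produces the uniform estimate $|G(u,v)|\ll 1/(r_1r_2)$ for all $(u,v)\in[a,b]\times[c,d]$.

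With this in hand, I would write
\begin{equation*}
\mathcal{I}_{(2)}\ =\ \int_a^b\int_c^d g(x,y)\,\partial_x\partial_y G(x,y)\,dy\,dx
\end{equation*}
and integrate by parts once in $y$ and once in $x$. Because $\mathrm{supp}(g)\subset(a,b)\times(c,d)$, the boundary values $g(x,c)$, $g(x,d)$, $\partial_y g(a,y)$ and $\partial_y g(b,y)$ all vanish, so every boundary contribution drops out and we are left with
\begin{equation*}
\mathcal{I}_{(2)}\ =\ \int_a^b\int_c^d \frac{\partial^2 g}{\partial x\,\partial y}(x,y)\,G(x,y)\,dy\,dx.
\end{equation*}
Bounding $|G|$ by its uniform estimate, pulling it outside, and recognizing the remaining integral as $\textit{var}(g)$ yields the claim.

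The only point requiring any care is the uniformity of the Srinivasan bound across all the sub-rectangles, which is automatic from the pointwise nature of (\ref{2nd.der.bound}). I do not anticipate a genuine obstacle; this argument is essentially the two-dimensional analogue of the standard one-variable trick of writing an oscillatory integral as a derivative of its partial antiderivative and then integrating by parts against a smooth weight.
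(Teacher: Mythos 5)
Your argument is correct and is precisely the one the paper intends: the text introduces the lemma with the single remark ``Integration by parts along with the above bound,'' and your write-up simply fills in that outline --- defining the double antiderivative $G$, getting the uniform bound $|G|\ll 1/(r_1r_2)$ from the pointwise nature of (\ref{2nd.der.bound}) on sub-rectangles, and killing the boundary terms via the support condition on $g$. Nothing further is needed.
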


\subsection{An integral of interest}
Following Munshi \cite{munshi}, let $W$ be a smooth real valued function with supp$(W)\subset[a,b]\subset(0,\infty)$ and $W^{(j)}(x)\ll_{a,b,j}1$. Define
\begin{equation}
W^{\dagger}(r,s)\int_0^\infty W(x)e(-rx)x^{s-1}dx
\end{equation}
where $r\in\BR$ and $s=\sigma+i\beta\in\BC$. This integral is of the form (\ref{spi}) with
\begin{equation*}
g(x)=W(x)x^{\sigma-1} \quad \textit{ and } \quad f(x)=-rx+\frac{1}{2\pi}\beta\log x.
\end{equation*}
Then,
\begin{equation*}
f'(x)=-r+\frac{1}{2\pi}\frac{\beta}{x}\quad\textit{ and }\quad f^{(j)}(x)=(-1)^j(j-1)!\frac{1}{2\pi}\frac{\beta}{x^j}
\end{equation*}
for $j\geq2$. The unique stationary phase occurs at $x_0=\beta/2\pi r$. Note that we can write
\begin{equation}
f'(x)=\frac{\beta}{2\pi}\left(\frac{1}{x}-\frac{1}{x_0}\right)=r\left(\frac{x_0}{x}-1\right).
\end{equation}
Applying Lemma \ref{stationary} appropriately to $W^{\dagger}(r,s)$, we get the following.
\begin{lem}\label{dagger}
Let $W$ be a smooth real valued function with supp$(W)\subset[a,b]\subset(0,\infty)$ and $W^{(j)}(x)\ll_{a,b,j}1$. Let $r\in\BR$ and $s=\sigma+i\beta\in\BC$. We have
\begin{equation}
W^{\dagger}(r,s)=\frac{\sqrt{2\pi}e(1/8)}{\sqrt{-\beta}}W\left(\frac{\beta}{2\pi r}\right)\left(\frac{\beta}{2\pi r}\right)^\sigma\left(\frac{\beta}{2\pi er}\right)^{i\beta}+O_{a,b,\sigma}\left(\min\{|\beta|^{-3/2},|r|^{-3/2}\}\right).
\end{equation}
We also have
\begin{equation}
W^{\dagger}(r,s)=O_{a,b,j,\sigma}\left(\min\left\lbrace\left(\frac{1+|\beta|}{|r|}\right)^j,\left(\frac{1+|r|}{|\beta|}\right)^j\right\rbrace\right).
\end{equation}
\end{lem}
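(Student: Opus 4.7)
The plan is to recognize $W^{\dagger}(r,s)$ as an oscillatory integral amenable to Lemma \ref{stationary}. Writing $x^{s-1}=x^{\sigma-1}e\left(\frac{\beta}{2\pi}\log x\right)$, I set $g(x)=W(x)x^{\sigma-1}$ and $f(x)=-rx+\frac{\beta}{2\pi}\log x$, so that $W^{\dagger}(r,s)=\int_0^\infty g(x)e(f(x))\,dx$. A direct computation gives $f'(x)=-r+\beta/(2\pi x)$ and $f^{(j)}(x)=(-1)^j(j-1)!\,\beta/(2\pi x^j)$ for $j\geq 2$, so on the support $[a,b]$ of $W$ one has $f^{(j)}(x)\asymp|\beta|$ and $g^{(j)}(x)\ll_{a,b,\sigma,j}1$. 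The stationary phase parameters are therefore $\Theta_f\asymp|\beta|$, $\Omega_f\asymp 1$, $\Omega_g\asymp 1$, and the unique critical point is $x_0=\beta/(2\pi r)$.

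For the main asymptotic I would split into two cases based on whether $x_0\in[a,b]$. When $x_0$ lies in (a slight interior shrinking of) $[a,b]$, the hypothesis (\ref{spb1}) holds since $|f''(x)|\gg|\beta|$, so Lemma \ref{stationary}(2) applies. Direct substitution gives $f(x_0)=\frac{\beta}{2\pi}\log(\beta/(2\pi er))$ and hence $e(f(x_0))=(\beta/(2\pi er))^{i\beta}$; also $f''(x_0)=-2\pi r^2/\beta$, giving $1/\sqrt{f''(x_0)}=\sqrt{-\beta}/(r\sqrt{2\pi})$ under the appropriate branch convention. Combined with $g(x_0)=W(\beta/(2\pi r))(\beta/(2\pi r))^{\sigma-1}$, the leading term of (\ref{SP}) assembles into the formula claimed. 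The three error contributions in (\ref{SP}) are each $O(|\beta|^{-3/2})$, and the constraint $x_0\in[a,b]$ forces $|\beta|\asymp|r|$, so the total error becomes $O(\min\{|\beta|^{-3/2},|r|^{-3/2}\})$. In the complementary regime $x_0\notin[a,b]$ the main term vanishes because $W(x_0)=0$, and the integral itself will be bounded using the polynomial-decay estimate treated next.

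The polynomial bound is established by two dual integration-by-parts arguments. Using $e(-rx)=-\frac{1}{2\pi ir}\frac{d}{dx}e(-rx)$ and integrating by parts $j$ times, the boundary terms vanish because $W$ is compactly supported, and each derivative of $W(x)x^{s-1}$ has size $O_{a,b,\sigma,j}((1+|\beta|)^j)$, which gives $W^{\dagger}(r,s)\ll((1+|\beta|)/|r|)^j$. Dually, from $x^{i\beta}=\frac{x}{i\beta}\frac{d}{dx}x^{i\beta}$ one integrates by parts $j$ times against this phase; derivatives of $W(x)x^{\sigma}e(-rx)$ are of size $O((1+|r|)^j)$, yielding $W^{\dagger}(r,s)\ll((1+|r|)/|\beta|)^j$. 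Taking the minimum proves the second bound. Choosing $j$ large in the appropriate one of these estimates also dominates $\min\{|\beta|^{-3/2},|r|^{-3/2}\}$ in the case $x_0\notin[a,b]$ that was postponed from the previous paragraph.

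The main technical obstacle is tracking the branch of $\sqrt{-\beta}$ so that the phase $e(1/8)$ emerges uniformly in the sign of $\beta$: for $\beta<0$ one has $f''(x_0)>0$ and the minimum version of Lemma \ref{stationary}(2) produces the formula directly, while for $\beta>0$ the stationary point is a maximum of $f$ and one invokes the complex-conjugate form, under the convention $1/\sqrt{-\beta}=e(-1/4)/\sqrt{|\beta|}$, to recover the stated expression. A secondary but routine issue is the transitional case where $x_0$ sits near the boundary of $[a,b]$ (so $\kappa$ in Lemma \ref{stationary}(2) is small), which I would handle by a smooth partition of unity together with the integration-by-parts bound to absorb any loss into the claimed error term.
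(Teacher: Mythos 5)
Your reduction is exactly the paper's: identify $g(x)=W(x)x^{\sigma-1}$, $f(x)=-rx+\frac{\beta}{2\pi}\log x$, locate $x_0=\beta/(2\pi r)$, and apply Lemma \ref{stationary}(2); the paper itself offers no more detail than ``applying Lemma \ref{stationary} appropriately.'' Your evaluations of $f(x_0)$, $f''(x_0)=-2\pi r^2/\beta$, and the assembly of the main term (including the factor $x_0^{\sigma-1}=x_0^{\sigma}\cdot 2\pi r/\beta$ that converts $1/\sqrt{f''(x_0)}$ into $\sqrt{2\pi}/\sqrt{-\beta}$) are correct, as are the two dual integration-by-parts arguments for the polynomial bound.

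There is one step that fails as written: your claim that, when $x_0\notin[a,b]$, ``choosing $j$ large in the appropriate one of these estimates also dominates $\min\{|\beta|^{-3/2},|r|^{-3/2}\}$.'' If $x_0$ lies just outside $[a,b]$ --- say $x_0=2b$ or $x_0=a/2$ with $|\beta|\asymp|r|$ both large --- then both $(1+|\beta|)/|r|$ and $(1+|r|)/|\beta|$ are $\asymp 1$, so neither integration-by-parts bound decays at all, and they certainly do not beat $|\beta|^{-3/2}$. The same objection applies to your proposed treatment of the transitional case where $x_0$ sits near $\partial[a,b]$ and $\kappa$ is small. What is needed in these regimes is the non-stationary first-derivative bound, Lemma \ref{stationary}(1): since $x_0\notin[a,b]$ (or $W$ is restricted by a partition of unity to a piece avoiding $x_0$), one has $|f'(x)|=|r||x_0/x-1|\gg|r|$ there, and with $\Theta_f\asymp|\beta|\asymp|r|$, $\Omega_f\asymp\Omega_g\asymp 1$, $\Lambda\gg|r|$ the bound \eqref{SPB} gives $O(|r|^{-2})$, which is admissible. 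With that substitution the proof is complete; the integration-by-parts estimates are only needed (and only work) in the complementary regimes where $|\beta|$ and $|r|$ are of genuinely different sizes.
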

 
\section{Application of Dual summation formulas}

\subsection{Poisson summation to the $m$-sum}
The $m$-sum is given by
\begin{equation*}
\sum_{m\geq1}m^{-i(t+v)}e\left(\frac{-m\bar{a}-m\alpha q}{qp} + \frac{mx}{aqp} \right)U\left(\frac{m}{N}\right).
\end{equation*}
Breaking the $m$-sum into congruence classes modulo $qp$ by changing variables $m\mapsto\beta+mqp$, we get
\begin{equation*}
\sum_{\beta\mod qp}e\left(\frac{-\beta\overline{a}-\beta\alpha q}{qp}\right)\sum_{m\in\BZ}(\beta+mqp)^{-i(t+v)}e\left(\frac{(\beta+mqp)x}{aqp}\right)U\left(\frac{\beta+mqp}{N}\right).
\end{equation*}
Poisson summation to the $m$-sum gives
\begin{equation*}
\sum_{\beta\mod qp}e\left(\frac{-\beta\overline{a}-\beta\alpha q}{qp}\right)\sum_{m\in\BZ}\int_{\BR}(\beta+yqp)^{-i(t+v)}e\left(\frac{(\beta+yqp)x}{aqp}\right)U\left(\frac{\beta+yqp}{N}\right)e(-my)dy.
\end{equation*}
Letting $u=(\beta+yqp)/N$ and executing the complete character sum $\mod qp$, we arrive at
\begin{equation}\label{msum}
N^{1-i(t+v)}\underset{\begin{subarray}{c} m\in\BZ \\ m\equiv\bar{a}+\alpha q(\mod qp)\end{subarray}}{\sum}\int_{\BR} u^{-i(t+v)}e\left(\frac{N(x-ma)}{aqp}u\right) U(u) du 
\end{equation}
The above integral equals
\begin{equation}
U^{\dagger}\left(\frac{N(ma-x)}{aqp}, 1-i(t+v)\right).
\end{equation}
Since $m\equiv\bar{a}+\alpha q\bmod qp$, we have $m\equiv\overline{a}\bmod q$ and $\alpha\equiv\frac{m-\bar{a}}{q}\bmod p$. Therefore $\alpha$ is determined $
\mod p$, and the $\alpha$-sum in the expression of $S^+(N)$ vanishes. Everything together,
\begin{equation}
\begin{split}
S^+(N) = &\frac{1}{K} \int_0^1\int_{\R} V\left(\frac{v}{K}\right) \underset{1\leq q\leq Q<a\leq Q}{ \sum \sideset{}{^*}\sum} \frac{1}{aqp} \sum_{n\geq 1} \lambda(n) n^{iv}V\left(\frac{n}{N}\right)\\ &\times N^{1-i(t+v)} \sum_{m\equiv\bar{a}\mod q} e\left(\frac{nm}{qp} - \frac{nx}{aqp} \right) U^\dagger\left(\frac{N(ma-x)}{aqp}, 1-i(t+v)\right) dv dx.
\end{split}
\end{equation}
We first observe that $m=0$ can occur only when $q=1$ and bounds on $U^{\dagger}$ from lemma \ref{dagger} give arbitrary saving as soon as $Q$ has a size in $N$, i.e. as soon as $Q=N^{\varepsilon}$ for any $\varepsilon>0$. With the assumption $K\ll t$, lemma \ref{dagger} gives arbitrary saving for $|m|\gg qpt^{1+\epsilon}/N$. The sum becomes
\begin{equation}
\begin{split}
S^+(N)= \frac{N^{1-it}}{K}\int_0^1\int_{\BR}&N^{-iv}\underset{1\leq q<Q}{\sum}\underset{\begin{subarray}{c} 1\leq |m|\ll qpt^{1+\epsilon}/N \\ (m,q)=1 \end{subarray}}{\sum} \frac{1}{aqp} U^{\dagger}\left(\frac{N(ma-x)}{aqp}, 1-i(t+v)\right)V\left(\frac{v}{K}\right) \\& \sum_{n\geq1}\lambda(n) n^{iv}V\left(\frac{n}{N}\right)e\left(\frac{nm}{qp}-\frac{nx}{aqp}\right)dvdx
\end{split}
\end{equation}

We next split the $q-$sum into dyadic segments $(C,2C]$ 
\begin{equation*}
S^+(N) = \frac{N}{K} \underset{\begin{subarray}{c}1\leq C\leq Q\\ \textit{ dyadic }\end{subarray}}{\sum} S(N,C)
\end{equation*}
where
\begin{equation}
\begin{split}
S(N,C) = \int_0^1\int_{\R} N^{-i(t+v)}V\left(\frac{v}{K}\right)&\sum_{C< q\leq 2C}\underset{(m,q)=1}{\sum_{1\leq |m|\ll \frac{qpt^{1+\epsilon}}{N}}}\frac{1}{aqp} U^\dagger\left(\frac{N(ma-x)}{aqp},1-i(t+v)\right) \\ & \sum_{n\geq1} \lambda(n) n^{iv}V\left(\frac{n}{N}\right) e\left(\frac{nm}{qp}-\frac{nx}{aqp}\right) dv dx.
\end{split}
\end{equation}

\subsection{Voronoi summation to the $n$-sum} 
Let $(m,p)=p^\delta$, for $\delta=0, 1$. When $-D\equiv2,3\bmod4$, the level of the cusp form is $L=4Dp$. When $-D\equiv1\bmod4$, the level is $L=Dp$. In the former case, $L_1=(4D,q)p^{1-\delta}$ and $L_2=4Dp^{\delta}/(4D,q)$. In the latter case, $L_1=(D,q)p^{1-\delta}$ and $L_2=D_qp^{\delta}$. In the latter case, $(L_1,L_2)=1$ and we can use the Voronoi summation formula in \cite{kmv}. However in the former case, $(L_1,L_2)=1, 2$ depending on the congruence class of $D$ and $q$ $\bmod2$. We therefore take up the case where \cite{kmv} is insufficient. The calculations for $-D\equiv2,3\bmod4$ are similar, that's why we present the details of the case $-D\equiv3\bmod4$.

For ease of notation, let $g=(D,q), D_q=D/(D,q)$ and $4_q=4/(4,q)$. Then $L_1=(4,q)gp^{1-\delta}$ and $L_2=4_qD_qp^\delta$. $L_3=2$ if $q\equiv2\bmod4$ and $L_3=1$ otherwise. Due to lemma \ref{voronoi}, the calculations for the cases $q\equiv2\bmod4$ and $q\nequiv2\bmod4$ are similar, so it suffices to work with one of the cases. When $q\nequiv2\bmod4$,
\begin{equation*}
\begin{split}
\sum_{n\geq1}\lambda(n)e\left(\frac{nm}{qp}\right)n^{iv}e\left(\frac{-nx}{aqp}\right)V\left(\frac{n}{N}\right)=&\frac{2\pi i^kN^{1+iv}}{4\pi iqp^{1-\delta}}\chi_{L_1}(-\overline{m}p^{\delta})\chi_{L_2}(-qp^{1-\delta})\frac{\eta(4_qD_qp^\delta)}{\sqrt{4_qD_qp^\delta}}\sum_{n\geq1}\lambda_{L_2}(n)e\left(-n\frac{\overline{m4_qD_q}}{qp^{1-\delta}}\right)\\&\times\int_0^\infty y^{iv}e\left(\frac{-yNx}{aqp}\right)V(y)\int_{(\sigma)}\left(\frac{2\pi\sqrt{Nny}}{qp^{1-\delta/2}\sqrt{4_qD_q}}\right)^{-s}\gamma_k(s)dsdy. 
\end{split}
\end{equation*}
where 
\begin{equation}
\gamma_k(s) = (2\pi)^{-s}\frac{\Gamma(s/2 + (k-1)/2)}{\Gamma(1-s/2+(k-1)/2)}.
\end{equation}
Since $k>1$, $\gamma_k$ has no poles in the region $Re(s)>-1$, and we can shift the $s$-integral to $\sigma=-1/2$ without picking up poles. This allows us to interchange the $s$ and $x$ integrals.
\begin{equation}
\begin{split}
\sum_{n\geq1}\lambda(n)e\left(\frac{nm}{qp}\right)n^{iv}e\left(\frac{-nx}{aqp}\right)V\left(\frac{n}{N}\right)=&\frac{2\pi i^kN^{1+iv}}{4\pi iqp^{1-\delta}}\chi_{L_1}(-\overline{m}p^{\delta})\chi_{L_2}(-qp^{1-\delta})\frac{\eta(L_2)}{\sqrt{4_qD_qp^\delta}}\sum_{n\geq1}\lambda_{L_2}(n)e\left(-n\frac{\overline{m4_qD_q}}{qp^{1-\delta}}\right)\\&\times\int_{(\sigma)}\left(\frac{2\pi\sqrt{Nn}}{qp^{1-\delta/2}\sqrt{4_qD_q}}\right)^{-s}\gamma_k(s)V^\dagger\left(\frac{Nx}{aqp},1-s/2+iv\right)ds. 
\end{split}
\end{equation}
The bound on $V^\dagger$ gives
\begin{equation}
V^\dagger\left(\frac{xN}{aqp}, 1-s/2+iv \right) \ll_j \min \left\lbrace 1 , \left(\frac{1 + |Nx/aqp|}{|v-\tau/2|}\right)^j \right\rbrace.
\end{equation}
We can therefore shift the $s$-integral to $\sigma=M$ for large $M$ and get arbitrary saving for large $n$. With arguments similar to Remark 3.2 of \cite{ka17}, we will get arbitrary saving for $n>\max\left\lbrace Q^2p^{2-\delta}K^2D/N, DN/Q^2p^\delta\right\rbrace t^\varepsilon$. So the optimal choice of $Q=\sqrt{N/pK}$. Thus the introduction of divisibility by $p$ in (\ref{start}) is a conductor lowering trick. For smaller values of $n$, we shift the integral to $\sigma=1$. Note that $\gamma_k(1+i\tau)=O(1)$.

Assuming $K\ll t^{1-\epsilon}$, we get arbitrary saving for $|\tau| > Nt^\epsilon/QCp$ due to the bounds on $V^\dagger$. Thus we can restrict the integral to $\tau \in [-Nt^\epsilon/QCp, Nt^\epsilon/QCp]$ by defining a smooth partition of unity on this set. Let $W_J$  for $J\in\mathcal{J}$ be smooth bump functions satisfying $x^lW_J^{(l)}\ll_l 1$ for all $l\geq0$. For $J=0$, let the support of $W_0$ be in $[-1,1]$ and for $J>0$ (resp. $J<0$), let the support of $W_J$ be in $[J,4J/3]$ (resp $[4J/3,J]$). Finally, we require that
\begin{equation*}
\sum_{J\in \mathcal{J}} W_J(x) = 1 \text{\quad for \quad $x \in [-Nt^\epsilon/QCp, Nt^\epsilon/QCp]$}
\end{equation*}
The precise definition of the functions $W_J$ will not be needed. We note that we need only $O(\log(t))$ such $J\in\mathcal{J}$.
Then,
\begin{equation}\label{snc}
\begin{split}
S(N,C) =&\sum_{\delta\in\{0,1\}}\underset{\begin{subarray}{c}L_1|L\\ p^{1-\delta}|L_1\end{subarray}}{\sum}\frac{i^kN^{1/2-it}K}{2}\sum_{J\in\mathcal{J}}\sum_{n\ll KDp^{1-\delta}t^\varepsilon} \frac{\lambda_{L_2}(n)}{n^{1/2}}\underset{\begin{subarray}{c}C<q\leq2C\\L_1=(4D,q)p^{1-\delta}\end{subarray}}{\sum}\underset{\begin{subarray}{c}(m,q)=1\\(m,p)=p^\delta\\1\leq|m|\ll \frac{qpt^{1+\epsilon}}{N}\end{subarray}}{\sum}\chi_{L_1}(-\overline{m}p^{\delta})\chi_{L_2}(-qp^{1-\delta})\\&\times\eta(L_2)\frac{1}{aqp}e\left(-n\frac{\overline{m4_qD_q}}{qp^{1-\delta}}\right)\int_{\R} \left(\frac{2\pi\sqrt{Nn}}{qp^{1-\delta/2}\sqrt{4_qD_q}}\right)^{-i\tau}\gamma\left(1 + i\tau\right) W_J(\tau) \mathcal{I^{**}}(q,m,\tau) d\tau + O(t^{-2015}),
\end{split}
\end{equation}
where
\begin{equation}\label{I**}
\mathcal{I^{**}}(q,m,\tau)= \int_0^1 \int_{\R} V(v) U^{\dagger}\left( \frac{N(ma-x)}{aqp}, 1-i(t+Kv) \right) V^{\dagger}\left( \frac{Nx}{aqp}, \frac{1}{2} - \frac{i\tau}{2} + iKv \right) dv dx.
\end{equation}

\begin{rem}\label{rem1}
Trivially bounding the sums shows
\begin{equation*}
S(N,C)\ll \frac{K^{5/2}p^{1/2}t^{1+\varepsilon}}{N^{1/2}},
\end{equation*}
which is worse than the convexity bound by a factor of $t^{1/2}p^{1/4}K^{3/2}$. The next step would be to use stationary phase analysis on the $v$-integral to get some further saving.
\end{rem}

We shall analyze the integral $I^{**}$ in eqn (\ref{I**}) using stationary phase analysis and write $I^{**}(q,m,\tau)=I_1(q,m,\tau)+I_2(q,m,\tau)$. $I_1$ comes from the area on the $x$-$v$ plane where a stationary phase exists, while $I_2$ comes from the rest of the area which gives a smaller contribution.

\subsection{Stationary Phase Analysis on $I^{**}(q,m,\tau)$}

Stationary Phase Analysis on $U^\dagger$ and $V^\dagger$ gives,

\begin{equation}
\begin{split}
U^\dagger\left(\frac{N(ma-x)}{aqp},1-i(t+Kv)\right)=&\frac{-\sqrt{2\pi}e(1/8)}{\sqrt{t+Kv}}U\left(\frac{(t+Kv)aqp}{2\pi N(x-ma)}\right)\left(\frac{(t+Kv)aqp}{2\pi N(x-ma)}\right)\\&\left(\frac{(t+Kv)aqp}{2\pi eN(x-ma)}\right)^{-i(t+Kv)}+O(t^{-3/2})
\end{split}
\end{equation}

\begin{equation}
\begin{split}
V^\dagger\left(\frac{Nx}{aqp},\frac{1}{2}-\frac{i\tau}{2}-iKv\right)=&\frac{-\sqrt{2\pi}e(1/8)}{\sqrt{\tau/2-Kv}}V\left(\frac{(Kv-\tau/2)aqp}{2\pi Nx}\right)\left(\frac{(Kv-\tau/2)aqp}{2\pi Nx}\right)^{1/2}\\&\left(\frac{(Kv-\tau/2)aqp}{2\pi eNx}\right)^{i(Kv-\tau/2)}+O\left(\min\left\lbrace|\tau/2-Kv|^{-3/2},\left|\frac{aqp}{Nx}\right|^{3/2}\right\rbrace\right)
\end{split}
\end{equation}

Multiplying these together,

\begin{equation}\label{analyzedI**}
I^{**}(q,m,\tau)=\int\int\textit{Main terms}+O\left(t^{-1/2}\int_0^1\int_1^2\min\left\lbrace|\tau/2-Kv|^{-3/2},\left|\frac{aqp}{Nx}\right|^{3/2}\right\rbrace dvdx\right)
\end{equation}

\begin{rem}
Analysis as done in \cite{ka17} shows that the error term is $O\left(t^{-1/2+\varepsilon}K^{-3/2}\min\left\lbrace1,\frac{10K}{|\tau|}\right\rbrace\right)$. That cancels extra $N^{1/2}K^{3/2}$ over the convexity bound as mentioned in Remark (\ref{rem1}). If we now bound the $\tau$-integral in eqn (\ref{snc}), the $\tau$-integral would be
\begin{equation*}
\ll \int_{-Nt^{\varepsilon}/QpC}^{Nt^{\varepsilon}/QpC}\min\left\lbrace1,\frac{10K}{|\tau|}\right\rbrace d\tau \ll Kt^{\varepsilon}.
\end{equation*}
This again lands us right at the convexity bound when $K$ has no size. Therefore to get some further saving and beat the convexity bound, we will apply Cauchy inequality followed by Poisson summation to the $n$-sum.
\end{rem}

\subsubsection{Main terms}
The integral over the main terms in eqn (\ref{analyzedI**}) is,
\begin{equation}\label{mainterms}
\begin{split}
\left(\frac{2\pi aqp}{Nt}\right)^{1/2}\int_0^1\frac{1}{x^{1/2}}\int_1^2&\frac{t^{1/2}(t+Kv)^{1/2}aqp}{2\pi N(x-ma)}U\left(\frac{(t+Kv)aqp}{2\pi N(x-ma)}\right)V\left(\frac{(Kv-\tau/2)aqp}{2\pi Nx}\right)V(v)\\&\left(\frac{(t+Kv)aqp}{2\pi eN(x-ma)}\right)^{-i(t+Kv)}\left(\frac{(Kv-\tau/2)aqp}{2\pi eNx}\right)^{i(Kv-\tau/2)}dvdx
\end{split}
\end{equation}

\begin{rem}
Trivial estimate gives $I^{**}(q,m,\tau)\ll\frac{1}{(tK)^{1/2}}$. We need to save $K$ and a bit more to beat the convexity bound. Note that without this $K$, we would be stuck at the convexity bound. Although $K$ seems to be hurting us rather than helping right now, the final saving will come in the last step of Cauchy and Poisson to the $(c,f)$-sum.
\end{rem}

\begin{rem}
Due to the weight function, $U\left(\frac{(t+Kv)aqp}{2\pi N(x-ma)}\right)$, $m$ is truly of size $tqp/N$ (and $m<0$) instead of the full range $1\leq|m|\ll t^{1+\varepsilon}qp/N$, thus saving $t^{\varepsilon}$.
\end{rem}

We'll now do stationary phase analysis on the $v$-integral. This would save us another $K^{1/2}$. Writing the integral over $v$ in eqn (\ref{mainterms}) as $\int_1^2g(v)e(f(v))dv$, we have
\begin{equation}
f(v)=\frac{-(t+Kv)}{2\pi}\log\left(\frac{(t+Kv)aqp}{2\pi eN(x-ma)}\right)+\frac{Kv-\tau/2}{2\pi}\log\left(\frac{(Kv-\tau/2)aqp}{2\pi eNx}\right)
\end{equation}
and
\begin{equation}\label{g(v)}
g(v)=\frac{t^{1/2}(t+Kv)^{1/2}aqp}{2\pi N(x-ma)}U\left(\frac{(t+Kv)aqp}{2\pi N(x-ma)}\right)V\left(\frac{(Kv-\tau/2)aqp}{2\pi Nx}\right)V(v)
\end{equation}

The stationary phase occurs at,
\begin{equation}
v_0=\frac{-(\tau/2+t)x}{Kma}+\frac{\tau}{2K}.
\end{equation}
Also, for $j\geq2$
\begin{equation}
f^{(j)}(v)=\frac{K}{2\pi}(-1)^j(j-2)!\left(\frac{K^{j-1}}{(Kv-\tau/2)^{j-1}}-\frac{K^{j-1}}{(t+Kv)^{j-1}}\right).
\end{equation}
In the support of the integral, $1/(Kv-\tau/2)\asymp aqp/Nx\geq aqp/N\gg K/t\geq K/(t+Kv)$. Therefore, in the support of the integral
\begin{equation*}
f^{(j)}(v)\asymp_j K\left(\frac{aqpK}{Nx}\right)^{j-1} \quad \textit{ for }j\geq2.
\end{equation*}
Also, in the support of the integral,
\begin{equation*}
g^{(j)}(v)\ll_j \left(1+\frac{Kaqp}{Nx}\right)^j \quad \textit{ for }j\geq0.
\end{equation*}

We can write,
\begin{equation}
f'(v)=\frac{K}{2\pi}\log\left(1+\frac{K(v_0-v)}{(t+Kv)}\right)-\frac{K}{2\pi}\log\left(1+\frac{K(v_0-v)}{(Kv-\tau/2)}\right).
\end{equation}
Due to the weight function $V(v)$ in expression (\ref{g(v)}), there is no stationary phase if $v_0\notin[0.5,2.5]$. In that case $|v_0-v|\geq0.5$ and
\begin{equation*}
|f'(v)|\gg K\min\left\lbrace1,\frac{Kaqp}{Nx}\right\rbrace
\end{equation*}
which is obtained by Taylor expanding the logarithms in the above expression. We'll use the following result due to Huxley.

\begin{lem}\label{stationaryphase}
Consider the integral,
\begin{equation*}
\mathfrak{I}=\int_a^b g(v)e(f(v))dv
\end{equation*}
where $g$ is supported on $[a,b]\subset(0,\infty)$. Let $\Theta_f, \Omega_f$ and $\Omega_g$ be such that $\Theta_f, \Omega_f\gg(b-a)$, and
\begin{equation}
f^{(i)}(v)\ll\Theta_f/\Omega_f^{i}, \quad g^{(j)}(v)\ll 1/\Omega_g^j.
\end{equation}
\begin{enumerate}
\item[(1)] If $f'(v)$ does not vanish on $[a,b]$, let $\Lambda=\min_{[a,b]}|f'(v)|$. Then,
\begin{equation}
\mathfrak{I}\ll \frac{\Theta_f}{\Omega_f^2\Lambda^3}\left(1+\frac{\Omega_f}{\Omega_g}+\frac{\Omega_f^2}{\Omega_g^2}\frac{\Lambda}{\Theta_f/\Omega_f}\right).
\end{equation}
\item[(2)] If $f'(v)$ vanishes at $v_0\in[a,b]$, let $\kappa=\min\{b-v_0,v_0-a\}$ and assume $f^{(2)}(v)\gg \Theta_f/\Omega_f^2$. Then,
\begin{equation}
\mathfrak{I}=\frac{g(v_0)e(f(v_0)+1/8)}{\sqrt{f''(v_0)}}+O\left(\frac{\Omega_f^4}{\Theta_f^3\kappa}+\frac{\Omega_f}{\Theta_f^{3/2}}+\frac{\Omega_f^3}{\Theta_f^{3/2}\Omega_g^2}\right).
\end{equation}
\end{enumerate}
\end{lem}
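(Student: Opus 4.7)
The plan is to establish both parts by the standard stationary-phase technique, following essentially the same bookkeeping as in Lemma \ref{stationary} but refined to produce the slightly different error term in part (2).

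For part (1), I would introduce the differential operator $\CD = (2\pi i f'(v))^{-1}\,\tfrac{d}{dv}$, which satisfies $\CD(e(f(v))) = e(f(v))$. Since $g$ is smooth and compactly supported in $[a,b]$, integration by parts yields $\mathfrak{I} = \int_a^b (\CD^\ast)^n g(v)\, e(f(v))\, dv$ for every $n\geq 1$, where $\CD^\ast g = -\bigl(g/(2\pi i f')\bigr)'$. One application of $\CD^\ast$ produces two terms: $g'/(2\pi i f')$, of size $1/(\Omega_g\Lambda)$, and $g f''/(2\pi i (f')^2)$, of size $(\Theta_f/\Omega_f^2)/\Lambda^2$. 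Iterating once more and enumerating the three possible combinations of where the derivatives land reproduces exactly the three summands inside the parentheses of the stated bound; the overall prefactor $\Theta_f/(\Omega_f^2\Lambda^3)$ arises from the dominant $f''/(f')^2$ structure together with one further $1/f'$ and the $\Omega_f$ length of integration.

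For part (2), I would localize near $v_0$ with a smooth bump $\phi(v)$ supported in $|v-v_0|\leq 2\delta$ and equal to $1$ on $|v-v_0|\leq \delta$, for a width $\delta$ to be optimized. Off the support of $\phi$ we have $|f'(v)|\gg |v-v_0|\cdot \Theta_f/\Omega_f^2$, so part (1) applied with $\Lambda\asymp \delta\,\Theta_f/\Omega_f^2$ yields the non-stationary contribution; balancing $\delta$ against the natural Gaussian window width $\Omega_f/\sqrt{\Theta_f}$ produces the error $\Omega_f^4/(\Theta_f^3 \kappa)$. On the support of $\phi$, Taylor-expand $f(v) = f(v_0) + \tfrac{1}{2}f''(v_0)(v-v_0)^2 + R(v)$ with $R(v) = O(\Theta_f(v-v_0)^3/\Omega_f^3)$, and write $g(v) = g(v_0) + (g(v)-g(v_0))$. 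Substituting $u = (v-v_0)\sqrt{2\pi f''(v_0)}$ and applying the Fresnel identity $\int_{\BR} e(u^2/2)\,du = e(1/8)$ extracts the main term $g(v_0)\,e(f(v_0)+1/8)/\sqrt{f''(v_0)}$. The remaining errors $\Omega_f/\Theta_f^{3/2}$ and $\Omega_f^3/(\Theta_f^{3/2}\Omega_g^2)$ come respectively from linearizing $e(R(v))-1$ against the Gaussian and from the Taylor remainder of $g$ at $v_0$.

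The main obstacle is the bookkeeping in part (2): three independent error contributions (from truncating the integral away from $v_0$, from the cubic remainder in $f$, and from the variation of $g$) must be balanced so that their resulting shapes match the stated expression precisely. In particular, obtaining the error $\Omega_f^4/(\Theta_f^3 \kappa)$ rather than the form $\Omega_f^4/(\Theta_f^2 \kappa^3)$ seen in Lemma \ref{stationary} requires using only a single integration by parts in the non-stationary region, with $\Lambda$ scaling linearly in the distance from $v_0$, rather than the iterated estimate; verifying that this weaker off-center bound still integrates to the claimed shape after optimizing $\delta$ is the most delicate calculation.
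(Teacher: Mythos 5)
The paper does not actually prove this lemma: it is quoted verbatim as ``a result due to Huxley'' (via \cite{munshi} and Lemmas 2.3--2.5 of \cite{ka17}), so there is no in-paper argument to compare against. Your sketch is the standard proof of such statements, and in outline it is sound: two-fold integration by parts with the operator $\CD=(2\pi i f')^{-1}\frac{d}{dv}$ for part (1) (note that $(\CD^{\ast})^{2}g$ produces \emph{four} term-types, the fourth being $g(f'')^{2}/(f')^{4}$, which after integrating over a length $\ll\Omega_f$ contributes $\Theta_f^{2}/(\Omega_f^{3}\Lambda^{4})$ --- you should check it is absorbed into the displayed bound, which happens only when $\Lambda\gg\Theta_f/\Omega_f$ or by a slightly different bookkeeping), and for part (2) a smooth excision around $v_0$, a Fresnel evaluation of the quadratic model, and Taylor remainders for $f$ and $g$.

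The genuine gap is the device you introduce to manufacture the error term $\Omega_f^{4}/(\Theta_f^{3}\kappa)$. A \emph{single} integration by parts in the region $|v-v_0|\geq\delta$, using $|f'(v)|\gg|v-v_0|\Theta_f/\Omega_f^{2}$, leaves the term $\int |g f''|/|f'|^{2}\ll\Omega_f^{2}/(\Theta_f\delta)$; with the natural Gaussian window $\delta\asymp\Omega_f/\Theta_f^{1/2}$ this is $\asymp\Omega_f/\Theta_f^{1/2}$, i.e.\ the same size as the main term $g(v_0)/\sqrt{f''(v_0)}$, so the estimate is vacuous. The workable route is the one you were trying to avoid: apply part (1) in the non-stationary range with $\Lambda\asymp\kappa\,\Theta_f/\Omega_f^{2}$, which yields $\Theta_f/(\Omega_f^{2}\Lambda^{3})\asymp\Omega_f^{4}/(\Theta_f^{2}\kappa^{3})$ --- exactly the first error term of Lemma \ref{stationary}. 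The exponent pattern $\Omega_f^{4}/(\Theta_f^{3}\kappa)$ printed in Lemma \ref{stationaryphase} is almost certainly a transcription slip from the source (the two lemmas are otherwise the same statement), and you should not contort the argument to reproduce it; prove the $\Omega_f^{4}/(\Theta_f^{2}\kappa^{3})$ form and note the discrepancy instead.
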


%\begin{rem}
%The bound of lemma (\ref{stationaryphase}) is worse than the trivial bound for small $x$ because of the contribution of $\Omega_f$. While for large $x$, the trivial bound is always worse than the lemma-bound. We will therefore trivially estimate the integral for $x\in[0,A]$ and then use the lemma when $x\in[A,1]$. It makes sense to choose $A$ so that the trivial bound over $x\in[0,A]$ matches with the error terms for large x, that is $x\in[A,1]$. Note that,
%\begin{enumerate}
%\item Trivially estimating the $(x,v)$ integral for $x\in[0,A]$, the bound would be the length of integral $(NA/Kaqp)$ times the integral $\int_0^{A}1/x^{1/2}$. That contributes $NA^{3/2}/Kaqp$.
%\item Lemma bound for the integral over $x\in[A,1]$ when no stationary phase exists gives the bound of $1/K^2$ (taking $q\asymp Q$).
%\item Error term in the lemma bound for $x\in[A,1]$ when a stationary phase exists gives the bound of $1/K^{3/2}$ (taking $q\asymp Q$).
%\end{enumerate}
%Setting $NA^{3/2}/Kaqp=1/K^{3/2}$ (taking $q\asymp Q$), we get $A=1/K$.
%\end{rem}

Trivially bounding the integral in expression (\ref{mainterms}) for the range $x\in[0,1/K]$,
\begin{equation}\label{trivialbound}
\begin{split}
\ll&\left(\frac{aqp}{Nt}\right)^{1/2}\int_0^{1/K}\frac{1}{x^{1/2}}\int_1^2\frac{t^{1/2}(t+Kv)^{1/2}aqp}{2\pi N(x-ma)}U\left(\frac{(t+Kv)aqp}{2\pi N(x-ma)}\right)V\left(\frac{(Kv-\tau/2)aqp}{2\pi Nx}\right)V(v)dvdx\\
\ll&\frac{1}{t^{1/2}}\left(\frac{N}{aqp}\right)^{1/2}\frac{1}{K^{5/2}}.
\end{split}
\end{equation}
For the range $x\in[1/K,1]$, we use the bound in lemma (\ref{stationaryphase}). In the case there is no stationary phase, we will use the first statement of lemma (\ref{stationaryphase}). We have,
\begin{equation}
\Theta_f=\frac{Nx}{aqp},\quad\Omega_f=\frac{Nx}{aqpK},\quad\Lambda=K\min\left\lbrace1,\frac{Kaqp}{Nx}\right\rbrace,\quad\Omega_g=\min\left\lbrace1,\frac{Nx}{aqpK}\right\rbrace.
\end{equation}

Next is the contribution of $x\in[1/K,1]$ when there is no stationary phase. When $x<aqDK/N$, $\Lambda=K$ and $\Omega_g=\Omega_f$. In that case, the contribution is 
\begin{equation*}
\left(\frac{2\pi aqp}{Nt}\right)^{1/2}\int_{1/K}^{\max\{\frac{1}{K},\frac{Kaqp}{N}\}}\frac{1}{x^{1/2}}\frac{aqp}{NKx}dx\ll \frac{1}{t^{1/2}K^{2}}.
\end{equation*}
This is always smaller than the contribution of (\ref{trivialbound}). When $x>aqpK/N$, $\Lambda=K^2aqp/Nx$ and $\Omega_g=1$. In that case, the contribution is $1/K^3t^{1/2}$, which is better than above. We next calculate the contribution of the error term when there is a stationary phase. For that we have $\kappa>0.4$. One can calculate that for both $x<aqpK/N$ and $x>aqpK/N$, the contribution is $1/K^2t^{1/2}$. 

we summarize the analysis in the following Lemma. Let
\begin{equation}
B(C,\tau)= \frac{t^\varepsilon}{t^{1/2}K^{3/2}}\min\left\lbrace1,\frac{10K}{|\tau|}\right\rbrace + \frac{1}{t^{1/2}K^{5/2}}\left(\frac{N}{QCp}\right)^{1/2}.
\end{equation}

Note that,
\begin{equation}\label{bctau}
\int_{-Nt^{\varepsilon}/QCp}^{Nt^{\varepsilon}/QCp}B(C,\tau)d\tau\ll \frac{K}{t^{1/2}K^{3/2}}+\frac{1}{t^{1/2}K^{5/2}}\left(\frac{N}{QCp}\right)^{3/2}.
\end{equation}

\begin{lem}\label{lemmaI**}
Suppose $C<q\leq 2C$, with $1\ll C\leq (N/Kp)^{1/2}$ and $K$ satisfies $1\leq K\ll t^{1-\epsilon}$. Suppose $t>2$ and $|\tau|\ll N^{1/2}K^{1/2}t^{\epsilon}/p^{1/2}$. We have
\begin{equation*}
\mathcal{I}^{**}(q,m,\tau) = \mathcal{I}_1(q,m,\tau) + \mathcal{I}_2(q,m,\tau)
\end{equation*}
where
\begin{equation*}
\mathcal{I}_1(q,m,\tau) = \frac{c_4}{(t+\tau/2)^{1/2}K}\left(-\frac{(t+\tau/2)qp}{2\pi eNm}\right)^{3/2-i(t+\tau/2)}V\left(-\frac{(t+\tau/2)qp}{2\pi Nm}\right)\int_0^1 V\left(\frac{\tau}{2K} - \frac{(t+\tau/2)x}{Kma}\right) dx
\end{equation*}
for some absolute constant $c_4$ and
\begin{equation*}
\mathcal{I}_2(q,m,\tau) := \mathcal{I}^{**}(q,m,\tau) -  \mathcal{I}_1(q,m,\tau) = O(B(C,\tau)t^{\epsilon})
\end{equation*}
with $B(C,\tau)$ as defined in (\ref{bctau}).
\end{lem}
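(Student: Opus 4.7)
The proof will consolidate the stationary phase analysis already sketched in the preceding subsection into the clean statement of the lemma. The plan is a three‑stage reduction: first peel off the two dagger asymptotics, then execute stationary phase in the inner $v$‑integral, and finally bookkeep all the errors.

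First, I would apply Lemma \ref{dagger} to both $U^{\dagger}\bigl(\tfrac{N(ma-x)}{aqp},1-i(t+Kv)\bigr)$ and $V^{\dagger}\bigl(\tfrac{Nx}{aqp},\tfrac{1}{2}-\tfrac{i\tau}{2}+iKv\bigr)$ in (\ref{I**}). This produces, for $U^{\dagger}$, a leading stationary‑phase term of size $(t+Kv)^{-1/2}$ plus an error of size $t^{-3/2}$, and for $V^{\dagger}$ a leading term of size $|Kv-\tau/2|^{-1/2}$ plus the min‑type error already displayed in (\ref{analyzedI**}). When we expand the product and integrate against $V(v)\,dv\,dx$, the three cross contributions (main $\times$ error, error $\times$ main, error $\times$ error) are controlled using the $\min\{|\tau/2-Kv|^{-3/2},|aqp/Nx|^{3/2}\}$ bound exactly as indicated, and integrating over $v\in\mathrm{supp}(V)$ and $x\in[0,1]$ gives the first piece $t^{-1/2}K^{-3/2}\min\{1,10K/|\tau|\}$ of $B(C,\tau)$.

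Second, the remaining genuine main term is the double integral in (\ref{mainterms}). I split the $x$‑integral as $[0,1/K]\cup[1/K,1]$. On $[0,1/K]$ I use the trivial bound (\ref{trivialbound}), which contributes the second summand $t^{-1/2}K^{-5/2}(N/QCp)^{1/2}$ of $B(C,\tau)$. On $[1/K,1]$ I apply Lemma \ref{stationaryphase} to the $v$‑integral with
\begin{equation*}
\Theta_f=\frac{Nx}{aqp},\qquad \Omega_f=\frac{Nx}{aqpK},\qquad \Omega_g=\min\!\left\{1,\frac{Nx}{aqpK}\right\},
\end{equation*}
using the derivative estimates
\begin{equation*}
f^{(j)}(v)\asymp_j K\!\left(\frac{Kaqp}{Nx}\right)^{j-1}\quad (j\ge 2),\qquad f'(v)=\frac{K}{2\pi}\log\!\frac{(Kv_0-v)K+(t+Kv)}{t+Kv}-\frac{K}{2\pi}\log\!\frac{(Kv_0-v)K+(Kv-\tau/2)}{Kv-\tau/2}
\end{equation*}
already recorded in the excerpt. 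When there is no stationary phase in $[0.5,2.5]$, part (1) of Lemma \ref{stationaryphase} with $\Lambda=K\min\{1,Kaqp/(Nx)\}$ gives a contribution absorbed in $B(C,\tau)$; when the stationary point $v_0=-(\tau/2+t)x/(Kma)+\tau/(2K)$ lies in the support of $V(v)$, the error term of part (2) is again bounded by $1/(K^2 t^{1/2})$, also absorbed.

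Third, the leading contribution on $[1/K,1]$ arising from the stationary‑phase main term is $\mathcal{I}_1$. The algebraic simplification is where most of the work lies: at $v_0$ one has the clean identities
\begin{equation*}
t+Kv_0=\frac{(\tau/2+t)(ma-x)}{ma},\qquad Kv_0-\tfrac{\tau}{2}=-\frac{(\tau/2+t)x}{ma},
\end{equation*}
so both dagger weight arguments collapse to $-(t+\tau/2)qp/(2\pi Nm)$ independent of $x$, while $\sqrt{f''(v_0)}=K|m|a/\sqrt{2\pi(t+\tau/2)x(x-ma)}$. Multiplying $g(v_0)/\sqrt{f''(v_0)}$ with the outer $(2\pi aqp/Nt)^{1/2}x^{-1/2}$ and the phase factor $e(f(v_0)+1/8)$, the factors of $x$ and $(x-ma)$ cancel, $U\cdot V$ reduces to $V$ since $U\equiv 1$ on $\mathrm{supp}\,V$, and one recovers precisely
\begin{equation*}
\mathcal{I}_1(q,m,\tau)=\frac{c_4}{(t+\tau/2)^{1/2}K}\!\left(-\frac{(t+\tau/2)qp}{2\pi eNm}\right)^{3/2-i(t+\tau/2)}\!V\!\left(-\frac{(t+\tau/2)qp}{2\pi Nm}\right)\!\int_0^1\!V\!\left(\tfrac{\tau}{2K}-\tfrac{(t+\tau/2)x}{Kma}\right)dx.
\end{equation*}
The hardest part is this bookkeeping: verifying the sign conventions (recall $m<0$ on $\mathrm{supp}\,U$, so $-(t+\tau/2)qp/(2\pi Nm)>0$) and exactly matching the oscillatory factors and the absolute constant $c_4$ coming from the two instances of $e(1/8)$ and the $-\sqrt{2\pi}$ prefactors in Lemma \ref{dagger}. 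Everything else that is not $\mathcal{I}_1$ is then defined to be $\mathcal{I}_2$, and has already been shown to be $O(B(C,\tau)t^{\varepsilon})$ in the three preceding steps.
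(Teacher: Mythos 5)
Your proposal is correct and follows essentially the same route as the paper: apply the asymptotics of Lemma \ref{dagger} to both dagger factors, absorb the resulting cross terms into the first summand of $B(C,\tau)$, split the $x$-range at $1/K$, run Lemma \ref{stationaryphase} on the $v$-integral (with the same choices of $\Theta_f,\Omega_f,\Omega_g,\Lambda$), and extract $\mathcal{I}_1$ from the stationary point $v_0$, where your identities $t+Kv_0=(t+\tau/2)(ma-x)/(ma)$ and $Kv_0-\tau/2=-(t+\tau/2)x/(ma)$ correctly collapse both weight arguments to $-(t+\tau/2)qp/(2\pi Nm)$ and the phase to the stated $-i(t+\tau/2)$ power. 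The error bookkeeping matches the two summands of $B(C,\tau)$ exactly as in the paper, so no further comment is needed.
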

Consequently, we have the following decomposition of $S(N,C)$.

\begin{lem}
\begin{equation*}
S(N,C) = \sum_{J\in\mathcal{J}}\{S_{1, J}(N,C) + S_{2,J}(N, C)\} + O(t^{-2015})
\end{equation*}
where
\begin{equation*}
\begin{split}
S_{l,J}(N,C) =&\sum_{\delta\in\{0,1\}}\underset{\begin{subarray}{c}L_1|L\\ p^{1-\delta}|L_1\end{subarray}}{\sum}\frac{i^kN^{1/2-it}K}{2}\sum_{n\ll KDp^{1-\delta}t^\varepsilon} \frac{\lambda_{L_2}(n)}{n^{1/2}}\underset{\begin{subarray}{c}C<q\leq2C\\L_1=(4D,q)p^{1-\delta}\end{subarray}}{\sum}\underset{\begin{subarray}{c}(m,q)=1\\(m,p)=p^\delta\\1\leq|m|\ll \frac{qpt^{1+\epsilon}}{N}\end{subarray}}{\sum}\chi_{L_1}(-\overline{m}p^{\delta})\chi_{L_2}(-qp^{1-\delta})\\&\times\eta(L_2)\frac{1}{aqp}e\left(-n\frac{\overline{m4_qD_q}}{qp^{1-\delta}}\right)\mathcal{I}_{l,J}(q,m,n) + O(t^{-2015})
\end{split}
\end{equation*}
and
\begin{equation*}
\mathcal{I}_{l,J}(q,m,n) = \int_{\R} \left(\frac{2\pi\sqrt{Nn}}{qp^{1-\delta/2}\sqrt{4_qD_q}}\right)^{-i\tau}\gamma\left(1 + i\tau\right) W_J(\tau) \mathcal{I}_{l,J}(q,m,\tau) d\tau,
\end{equation*}
with $\mathcal{I}_{l,J}(q,m,\tau)$ defined by having an extra factor of $W_J(\tau)$ in the $\tau$-integral of $\mathcal{I}_l(q,m,\tau)$.
\end{lem}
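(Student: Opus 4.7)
The plan is to observe that this lemma is a direct bookkeeping consequence of Lemma \ref{lemmaI**} applied inside (\ref{snc}). I would start from the formula (\ref{snc}) for $S(N,C)$ obtained after Voronoi summation on the $n$-sum, truncation of the $n$-sum to $n\ll KDp^{1-\delta}t^{\varepsilon}$ via contour shift in the Mellin representation of $J_{k-1}$, truncation of the $m$-sum to $1\le |m|\ll qpt^{1+\varepsilon}/N$ via the bound on $U^{\dagger}$ in Lemma \ref{dagger}, and truncation of the $\tau$-integral to $|\tau|\ll N t^{\varepsilon}/QCp$ by the decay of $V^{\dagger}$ together with the partition of unity $\{W_J\}_{J\in\mathcal{J}}$. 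Each of these truncations contributes only $O(t^{-2015})$, which accounts for the error term in the target statement.

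With (\ref{snc}) in hand, I would substitute $\mathcal{I}^{**}(q,m,\tau)=\mathcal{I}_1(q,m,\tau)+\mathcal{I}_2(q,m,\tau)$ from Lemma \ref{lemmaI**} inside the $\tau$-integral against $\gamma(1+i\tau)W_J(\tau)(2\pi\sqrt{Nn}/(qp^{1-\delta/2}\sqrt{4_qD_q}))^{-i\tau}$. Since the $\delta$-sum, the $L_1$-sum, the $n$-sum, the $q$-sum, the $m$-sum and the integration in $\tau$ are all absolutely convergent (the truncations made above make them finite), linearity of integration lets me split the $\tau$-integrand into the two pieces $\mathcal{I}_1$ and $\mathcal{I}_2$. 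This produces two sums of exactly the shape displayed for $S_{1,J}(N,C)$ and $S_{2,J}(N,C)$ respectively, once I recognise that the full $\tau$-integral against $W_J$ times $\mathcal{I}_l(q,m,\tau)$ is, by definition, $\mathcal{I}_{l,J}(q,m,n)$.

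The only point to double-check is that the ranges of $C$ and $\tau$ appearing in Lemma \ref{lemmaI**} are compatible with those in (\ref{snc}). The former requires $1\ll C\le (N/Kp)^{1/2}$ and $|\tau|\ll N^{1/2}K^{1/2}t^{\varepsilon}/p^{1/2}$. The first condition is exactly our dyadic restriction $C\le Q=\sqrt{N/pK}$. For the second, the support of $W_J$ is contained in $[-Nt^{\varepsilon}/QCp,\,Nt^{\varepsilon}/QCp]\subseteq [-N^{1/2}K^{1/2}t^{\varepsilon}/p^{1/2},\,N^{1/2}K^{1/2}t^{\varepsilon}/p^{1/2}]$ since $QC\ge C\cdot C$ and $C\ge 1$; more directly, $N/QCp\le N/Qp=N^{1/2}K^{1/2}/p^{1/2}$. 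Thus Lemma \ref{lemmaI**} applies pointwise in $\tau$ under the support of $W_J$, and no further estimate is needed.

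There is no serious obstacle here: the statement is a formal rewriting step that isolates, for the Cauchy--Poisson analysis of the next section, the main stationary-phase term $\mathcal{I}_1$ (which carries the oscillatory factor $(-(t+\tau/2)qp/2\pi eNm)^{-i(t+\tau/2)}$ to be exploited after Cauchy in $n$) from the error $\mathcal{I}_2=O(B(C,\tau)t^{\varepsilon})$ (which will be estimated trivially using (\ref{bctau}) and the Ramanujan-on-average bound of Lemma \ref{ramonavg}). Collecting all of the above yields the claimed decomposition.
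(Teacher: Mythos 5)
Your proposal is correct and matches the paper's treatment: the paper presents this lemma as an immediate consequence of Lemma \ref{lemmaI**} substituted into (\ref{snc}) via linearity and the partition of unity $\{W_J\}$, which is exactly what you spell out. Your additional verification that the support of $W_J$ (contained in $|\tau|\ll Nt^{\varepsilon}/QCp\le N^{1/2}K^{1/2}t^{\varepsilon}/p^{1/2}$) and the dyadic range $C\le Q$ are compatible with the hypotheses of Lemma \ref{lemmaI**} is a worthwhile check that the paper leaves implicit.
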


\section{Application of Cauchy inequality and Poisson summation- I}

In this section, we will estimate
\begin{equation*}
S_2(N,C) := \sum_{J\in\mathcal{J}} S_{2,J}(N,C)
\end{equation*}
We'll not utilize any cancellation over the $\tau$-integral. Dividing the $n$-sum into dyadic segments and using the bound $\gamma(1+i\tau)\ll 1$,  we get
\begin{equation}
\begin{split}
S_2(N,C) \ll &\sum_{\delta\in\{0,1\}}\underset{\begin{subarray}{c}L_1|L\\ p^{1-\delta}|L_1\end{subarray}}{\sum} t^{\epsilon}N^{1/2}K\int_{-\frac{(NK)^{1/2}t^{\epsilon}}{p^{1/2}C}}^{\frac{(NK)^{1/2}t^{\epsilon}}{p^{1/2}C}}\underset{dyadic}{\sum_{1\leq R \ll KDp^{1-\delta}t^{\epsilon}}}\sum_n \frac{|\lambda(n)|}{n^{1/2}}U\left(\frac{n}{R}\right)\\&\times\bigg|\underset{\begin{subarray}{c}C<q\leq2C\\L_1=(4D,q)p^{1-\delta}\end{subarray}}{\sum}\underset{\begin{subarray}{c}(m,q)=1\\(m,p)=p^\delta\\1\leq|m|\ll \frac{qpt^{1+\epsilon}}{N}\end{subarray}}{\sum}\chi_{L_1}(-\overline{m}p^{\delta})\chi_{L_2}(-qp^{1-\delta})\frac{1}{a(qp)^{1-i\tau}p^{i\tau\delta/2}}e\left(-n\frac{\overline{m4_qD_q}}{qp^{1-\delta}}\right)\mathcal{I}_2(q,m,\tau)\bigg|d\tau
\end{split}
\end{equation}

We apply Cauchy inequality to the $n$-sum and write,
\begin{equation}
S_2(N,C)\ll \sum_{\delta\in\{0,1\}}\underset{\begin{subarray}{c}L_1|L\\ p^{1-\delta}|L_1\end{subarray}}{\sum}t^\varepsilon N^{1/2}K\int_{-\frac{Nt^\varepsilon}{CQp}}^{\frac{Nt^\varepsilon}{CQp}}\underset{dyadic}{\sum_{1\leq R \ll KDp^{1-\delta}t^{\epsilon}}} R^{1/2}[S_{2,\delta,L_1}(N,C,R,\tau)]^{1/2}d\tau,
\end{equation}
where
\begin{equation}
S_{2,\delta,L_1}(N,C,R,\tau)=\sum_{n}\frac{1}{n}U\left(\frac{n}{R}\right)\bigg|\underset{\begin{subarray}{c}C<q\leq2C\\L_1=(4D,q)p^{1-\delta}\end{subarray}}{\sum}\underset{\begin{subarray}{c}(m,q)=1\\(m,p)=p^\delta\\1\leq|m|\ll \frac{qpt^{1+\epsilon}}{N}\end{subarray}}{\sum}\frac{\chi_{L_1}(-\overline{m}p^{\delta})\chi_{L_2}(-qp^{1-\delta})}{a(qp)^{1-i\tau}p^{i\tau\delta/2}}e\left(-n\frac{\overline{m4_qD_q}}{qp^{1-\delta}}\right)\mathcal{I}_2(q,m,\tau)\bigg|^2.
\end{equation}

Opening the absolute value squared,
\begin{equation}
\begin{split}
S_{2,\delta,L_1}(N,C,R,\tau)=&\underset{\begin{subarray}{c}C<q\leq2C\\L_1=(4D,q)p^{1-\delta}\end{subarray}}{\sum}\underset{\begin{subarray}{c}(m,q)=1\\(m,p)=p^\delta\\1\leq|m|\ll \frac{qpt^{1+\epsilon}}{N}\end{subarray}}{\sum}\underset{\begin{subarray}{c}C<q'\leq2C\\L_1=(4D,q')p^{1-\delta}\end{subarray}}{\sum}\underset{\begin{subarray}{c}(m',q')=1\\(m',p)=p^\delta\\1\leq|m'|\ll \frac{q'pt^{1+\epsilon}}{N}\end{subarray}}{\sum} \frac{\chi_{L_1}(-\overline{m}p^{\delta})\chi_{L_2}(-qp^{1-\delta})}{a(qp)^{1-i\tau}p^{i\tau\delta/2}}\\&
\times \frac{\overline{\chi}_{L_1}(-\overline{m'}p^{\delta})\overline{\chi}_{L_2}(-q'p^{1-\delta})}{a'(q'p)^{1+i\tau}p^{-i\tau\delta/2}}I_2(q,m,\tau)\overline{I_2(q',m',\tau)}\times\mathbf{T}
\end{split}
\end{equation}
where
\begin{equation}
\mathbf{T}=\sum_n\frac{1}{n}U\left(\frac{n}{R}\right)e\left(-n\frac{\overline{m4_qD_q}}{qp^{1-\delta}}\right)e\left(n\frac{\overline{m'4_{q'}D_{q'}}}{q'p^{1-\delta}}\right)
\end{equation}
Breaking the $n$-sum modulo $qq'p^{1-\delta}$ and application of Poisson summation yields,
\begin{equation}
\mathbf{T}=\sum_n\delta(n\equiv\overline{m4_qD_q}q'-\overline{m'4_{q'}D_{q'}}q\bmod qq'p^{1-\delta})\int_\BR \frac{1}{y}U(y)e\left(\frac{-nyR}{qq'p^{1-\delta}}\right)dy.
\end{equation}
Repeated integration by parts shows that we get arbitrary saving in powers of $t$ when $n> qq'p^{1-\delta}t^\varepsilon/R$. Therefore,
\begin{equation}
\begin{split}
S_{2,\delta,L_1}(N,C,R,\tau)\ll &\frac{K}{NC^2p}B(C,\tau)^2\sum_{n\ll\frac{C^2p^{1-\delta}t^{\varepsilon}}{R}}\underset{\begin{subarray}{c}C<q\leq2C\\L_1=(4D,q)p^{1-\delta}\end{subarray}}{\sum}\underset{\begin{subarray}{c}(m,q)=1\\(m,p)=p^\delta\\1\leq|m|\ll \frac{qpt^{1+\epsilon}}{N}\end{subarray}}{\sum}\\&
\times\underset{\begin{subarray}{c}C<q'\leq2C\\L_1=(4D,q')p^{1-\delta}\end{subarray}}{\sum}\underset{\begin{subarray}{c}(m',q')=1\\(m',p)=p^\delta\\1\leq|m'|\ll \frac{q'pt^{1+\epsilon}}{N}\end{subarray}}{\sum} \delta(n\equiv\overline{m4_qD_q}q'-\overline{m'4_{q'}D_{q'}}q\bmod qq'p^{1-\delta}) + O(t^{-2017})
\end{split}
\end{equation}
When $n=0$, the congruence condition forces $q=q'$. Moreover when $m$ is fixed, $m'$ is determined up to $p^\delta t^{1+\varepsilon}/N$. In the case $n\neq0$, fixing $n, q, q'$ determines $m$ up to $pt^{1+\varepsilon}/N$ and $m'$ up to $p^\delta t^{1+\varepsilon}/N$. That gives us,
\begin{equation*}
S_{2,\delta,L_1}(N,C,R,\tau)\ll_D\frac{Kt^\varepsilon}{NC^2p}B(C,\tau)^2\left[\frac{C^2p^{1-\delta}t^2}{N^2}+ \frac{C^4p^{2-2\delta}t^2}{RN^2}\right]=\frac{Kt^{2+\varepsilon}}{N^3p^\delta}B(C,\tau)^2\left[1+\frac{C^2p^{1-\delta}}{R}\right].
\end{equation*}
Multiplying by $N^{1/2}K$, summing over $R$ dyadically and using (\ref{bctau}),
\begin{equation}
S_2(N,C)\ll_D \sum_{\delta\in\{0,1\}}\underset{\begin{subarray}{c}L_1|L\\ p^{1-\delta}|L_1\end{subarray}}{\sum}\frac{K^{3/2}t^{1+\varepsilon}p^{1/2-\delta}}{N}[K^{1/2}D^{1/2}+C]\times\left[\frac{K}{t^{1/2}K^{3/2}}+\frac{1}{t^{1/2}K^{5/2}}\left(\frac{N}{QCp}\right)^{3/2}\right]
\end{equation}
We assume $K>(N/p)^{1/2}$, so that $K^{1/2}D^{1/2}+C\asymp K^{1/2}D^{1/2}$. Multiplying by $N^{1/2}/K$ and summing over $C$ dyadically,
\begin{equation}\label{S_2}
\frac{S_2(N)}{N^{1/2}}\ll_{\varepsilon,D} t^{1/2+\varepsilon}p^{1/4}\left(\frac{K^{1/2}p^{1/4}}{N^{1/2}}+\frac{N^{1/4}}{K^{3/4}p^{1/2}}\right).
\end{equation}

\section{Application of Cauchy inequality and Poisson summation- II}

We recall that
\begin{equation*}
\begin{split}
S_{l,J}(N,C) =&\sum_{\delta\in\{0,1\}}\underset{\begin{subarray}{c}L_1|L\\ p^{1-\delta}|L_1\end{subarray}}{\sum}\frac{i^kN^{1/2-it}K}{2}\sum_{n\ll KDp^{1-\delta}t^\varepsilon} \frac{\lambda_{L_2}(n)}{n^{1/2}}\underset{\begin{subarray}{c}C<q\leq2C\\L_1=(4D,q)p^{1-\delta}\end{subarray}}{\sum}\underset{\begin{subarray}{c}(m,q)=1\\(m,p)=p^\delta\\1\leq|m|\ll \frac{qpt^{1+\epsilon}}{N}\end{subarray}}{\sum}\chi_{L_1}(-\overline{m}p^{\delta})\chi_{L_2}(-qp^{1-\delta})\\&\times\eta(L_2)\frac{1}{aqp}e\left(-n\frac{\overline{m4_qD_q}}{qp^{1-\delta}}\right)\int_{\R} \left(\frac{2\pi\sqrt{Nn}}{qp^{1-\delta/2}\sqrt{4_qD_q}}\right)^{-i\tau}\gamma\left(1 + i\tau\right) W_J(\tau) \mathcal{I}_{1,J}(q,m,\tau) d\tau + O(t^{-2015})
\end{split}
\end{equation*}
where
\begin{equation*}
\mathcal{I}_{1,J}(q,m,\tau) = \frac{c_4}{(t+\tau/2)^{1/2}K}\left(-\frac{(t+\tau/2)qp}{2\pi eNm}\right)^{3/2-i(t+\tau/2)}V\left(-\frac{(t+\tau/2)qp}{2\pi Nm}\right)\int_0^1 V\left(\frac{\tau}{2K} - \frac{(t+\tau/2)x}{Kma}\right)dx.
\end{equation*}

Swapping the $q,m$-sums with the $\tau$-integral, taking absolute values, applying Cauchy inequality to the $n$-sum and using the Ramanujan bound on average, we get
\begin{equation}
S_{1,J}(N,C)\ll \sum_{\delta\in\{0,1\}}\underset{\begin{subarray}{c}L_1|L\\ p^{1-\delta}|L_1\end{subarray}}{\sum}t^\varepsilon N^{1/2}K\underset{dyadic}{\sum_{1\leq R \ll KDp^{1-\delta}t^{\epsilon}}} R^{1/2}[S_{1,J,\delta,L_1}(N,C,R)]^{1/2}.
\end{equation}
where
\begin{equation}
\begin{split}
S_{1,J,\delta,L_1}(N,C,R)=&\sum_n \frac{1}{n}U\left(\frac{n}{R}\right)\bigg|\int_{\R} (\sqrt{nN})^{-i\tau}\gamma(1+i\tau) \underset{\begin{subarray}{c}C<q\leq2C\\L_1=(4D,q)p^{1-\delta}\end{subarray}}{\sum}\underset{\begin{subarray}{c}(m,q)=1\\(m,p)=p^\delta\\1\leq|m|\ll \frac{qpt^{1+\epsilon}}{N}\end{subarray}}{\sum}\frac{\chi_{L_1}(-\overline{m}p^{\delta})\chi_{L_2}(-qp^{1-\delta})}{a(qp)^{1-i\tau}p^{i\tau\delta/2}}\\&\times e\left(-n\frac{\overline{m4_qD_q}}{qp^{1-\delta}}\right)W_{J}(\tau)I_1(q,m,\tau) d\tau\bigg|^2
\end{split}
\end{equation}
Opening the absolute value squared,
\begin{equation}
\begin{split}
S_{1,J,\delta,L_1}(N,C,R)=&\int_\BR\int_\BR \sqrt{N}^{-i\tau+i\tau'}\gamma(1+i\tau)\overline{\gamma(1+i\tau')} W_J(\tau)W_J(\tau')\underset{\begin{subarray}{c}C<q\leq2C\\L_1=(4D,q)p^{1-\delta}\end{subarray}}{\sum}\underset{\begin{subarray}{c}(m,q)=1\\(m,p)=p^\delta\\1\leq|m|\ll \frac{qpt^{1+\epsilon}}{N}\end{subarray}}{\sum}\underset{\begin{subarray}{c}C<q'\leq2C\\L_1=(4D,q')p^{1-\delta}\end{subarray}}{\sum}\underset{\begin{subarray}{c}(m',q')=1\\(m',p)=p^\delta\\1\leq|m'|\ll \frac{q'pt^{1+\epsilon}}{N}\end{subarray}}{\sum}\\ & \frac{\chi_{L_1}(-\overline{m}p^{\delta})\chi_{L_2}(-qp^{1-\delta})}{a(qp)^{1-i\tau}p^{i\tau\delta/2}}\frac{\overline{\chi}_{L_1}(-\overline{m'}p^{\delta})\overline{\chi}_{L_2}(-q'p^{1-\delta})}{a'(q'p)^{1+i\tau'}p^{-i\tau'\delta/2}}I_1(q,m,\tau)\overline{I_1(q',m',\tau')}\times\mathbf{T}d\tau d\tau'
\end{split}
\end{equation}
where
\begin{equation*}
\textbf{T} = \sum_n n^{-1+\frac{-i\tau+i\tau'}{2}} U\left(\frac{n}{R}\right)e\left(-n\frac{\overline{m4_qD_q}}{qp^{1-\delta}}\right)e\left(n\frac{\overline{m'4_{q'}D_{q'}}}{q'p^{1-\delta}}\right).
\end{equation*}
We analyze $\mathbf{T}$ by breaking the $n$-sum modulo $qq'p^{1-\delta}$ and applying Poisson summation,
\begin{equation}
\mathbf{T}=R^{-i(\tau-\tau')/2}\sum_n \delta(n\equiv\overline{m4_qD_q}q'-\overline{m'4_{q'}D_{q'}}q\bmod qq'p^{1-\delta})U^\dagger\left(\frac{nR}{qq'p^{1-\delta}},\frac{-i(\tau-\tau')}{2}\right).
\end{equation}
Since $|\tau-\tau'|\ll (NK)^{1/2}t^{\epsilon}/Cp^{1/2}$, the bound on $U^{\dagger}$ gives arbitrary saving for $|n|\gg C(NK)^{1/2}p^{1/2-\delta}t^{\varepsilon}/R$. We therefore get
\begin{lem}
\begin{equation}
\begin{split}
S_{1,J,\delta,L_1}(N,C,R)\ll &\frac{K}{NC^2p}\underset{\begin{subarray}{c}C<q\leq2C\\L_1=(4D,q)p^{1-\delta}\end{subarray}}{\sum}\underset{\begin{subarray}{c}(m,q)=1\\(m,p)=p^\delta\\1\leq|m|\ll \frac{qpt^{1+\epsilon}}{N}\end{subarray}}{\sum}\underset{\begin{subarray}{c}C<q'\leq2C\\L_1=(4D,q')p^{1-\delta}\end{subarray}}{\sum}\underset{\begin{subarray}{c}(m',q')=1\\(m',p)=p^\delta\\1\leq|m'|\ll \frac{q'pt^{1+\epsilon}}{N}\end{subarray}}{\sum}\sum_{n\ll C(NK)^{1/2}p^{1/2-\delta}t^\varepsilon/R}\\&\delta(n\equiv\overline{m4_qD_q}q'-\overline{m'4_{q'}D_{q'}}q\bmod qq'p^{1-\delta})|\mathfrak{K}| + O(t^{-2017}).
\end{split}
\end{equation}
where
\begin{equation}
\begin{split}
\mathfrak{K} = \underset{\R^2}{\int\int}(NR)^{-i\tau/2+i\tau'/2}&\gamma(1+i\tau)\overline{\gamma(1+i\tau')}\frac{1}{q^{-i\tau}q'^{i\tau}p^{-i(\tau-\tau')(1-\delta/2)}}W_J(\tau)W_J(\tau') \mathcal{I}_1(q,m,\tau)\overline{\mathcal{I}_1(q',m',\tau')}\\&\times U^{\dagger}\left(\frac{nR}{qq'p^{1-\delta}}, \frac{-i(\tau-\tau')}{2}\right) d\tau d\tau'
\end{split}
\end{equation}
\end{lem}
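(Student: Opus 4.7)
The plan is to start from the displayed expansion of $S_{1,J,\delta,L_1}(N,C,R)$ after Cauchy and the opening of the absolute value squared, and to isolate the inner sum over $n$ as the quantity $\mathbf T$. All the factors depending on $q,m,\tau$ and $q',m',\tau'$ (the Dirichlet characters, the powers of $a,a',q,q',p$, the stationary-phase main term $\mathcal I_1$, and the gamma and cutoff factors $\gamma(1+i\tau)\overline{\gamma(1+i\tau')}W_J(\tau)W_J(\tau')$) are independent of $n$, so they can be pulled out of the $n$-sum. The $n$-sum $\mathbf T$ will be handled by dualizing it using Poisson summation.

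To treat $\mathbf T$, first break the sum over $n$ into residue classes modulo $qq'p^{1-\delta}$ by writing $n=\alpha+qq'p^{1-\delta}\ell$ with $\alpha$ running over a complete set of residues. The weight $n^{-1+i(\tau'-\tau)/2}U(n/R)$ is smooth, and the additive twists $e(-n\overline{m4_qD_q}/qp^{1-\delta})$ and $e(n\overline{m'4_{q'}D_{q'}}/q'p^{1-\delta})$ combine to an additive character modulo $qq'p^{1-\delta}$ (note that the two moduli $qp^{1-\delta}$ and $q'p^{1-\delta}$ share the factor $p^{1-\delta}$, so it is this least common multiple that appears). Poisson summation in $\ell$ followed by the evaluation of the complete exponential sum modulo $qq'p^{1-\delta}$ produces the congruence condition
\[
n\equiv \overline{m4_qD_q}q'-\overline{m'4_{q'}D_{q'}}q\pmod{qq'p^{1-\delta}},
\]
a factor $R^{-i(\tau-\tau')/2}$ coming from the change of variables in the $U$-Mellin transform, and the archimedean weight $U^{\dagger}\bigl(nR/qq'p^{1-\delta},-i(\tau-\tau')/2\bigr)$, yielding the stated formula for $\mathbf T$.

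Next, the $n$-sum is truncated using the second estimate of Lemma \ref{dagger}: with $r=nR/qq'p^{1-\delta}$ and $\beta=-(\tau-\tau')/2$, the bound $U^{\dagger}(r,s)\ll_j (1+|\beta|)^j/|r|^j$ gives arbitrary polynomial decay in $t$ as soon as $|r|\gg (1+|\beta|)t^{\varepsilon}$. Since $\tau,\tau'$ lie in the support of $W_J$, and in the second application of stationary phase we have (after taking $C\le Q=\sqrt{N/Kp}$) $|\tau-\tau'|\ll (NK)^{1/2}t^{\varepsilon}/Cp^{1/2}$, this translates to the truncation $|n|\ll C(NK)^{1/2}p^{1/2-\delta}t^{\varepsilon}/R$ after absorbing $qq'\asymp C^2$; the tail contributes the $O(t^{-2017})$ error.

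Finally, after the truncation, the factor $K/(NC^2p)$ is produced by taking absolute values on the $n$-independent pieces: the sizes of $a,a'\asymp Q=(N/Kp)^{1/2}$, the powers $q,q'\asymp C$, and the two factors of $p$ combine as $1/(aqp\cdot a'q'p)\asymp K/(NC^2p)$, while all remaining $\tau,\tau'$-dependent factors (including $\mathcal I_1,\overline{\mathcal I_1}$, $\gamma(1+i\tau)$, $W_J$, and the $U^{\dagger}$-weight) are kept inside the double integral $\mathfrak K$. Moving $|\mathfrak K|$ outside the character-twisted congruence sum then gives the claimed bound. The bookkeeping step that requires most care is lining up the modulus $qq'p^{1-\delta}$ correctly when $\delta=1$ versus $\delta=0$, since the $p$-part of the twist in the two factors overlaps; the rest is a direct computation once the Poisson dualization is in place.
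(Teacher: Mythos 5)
Your proposal follows essentially the same route as the paper: open the absolute value squared, isolate the $n$-sum as $\mathbf{T}$, break it into residues modulo $qq'p^{1-\delta}$ and apply Poisson summation to obtain the congruence condition together with the weight $U^{\dagger}\bigl(nR/qq'p^{1-\delta},-i(\tau-\tau')/2\bigr)$, then truncate at $|n|\ll C(NK)^{1/2}p^{1/2-\delta}t^{\varepsilon}/R$ using the decay of $U^{\dagger}$ and the restriction $|\tau-\tau'|\ll (NK)^{1/2}t^{\varepsilon}/Cp^{1/2}$, with the prefactor $K/NC^2p$ coming from $1/(aqp\cdot a'q'p)$. The argument and all the key quantitative inputs match the paper's proof.
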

Using the expression for $\mathcal{I}_1(q,m,\tau)$ as given in lemma \ref{lemmaI**}, we get the expression
\begin{equation}
\begin{split}
\mathfrak{K} = \frac{|c_4|^2}{K^2}\underset{\R^2}{\int\int} & \gamma(1+i\tau)\overline{\gamma(1+i\tau')}W_J(q,m.\tau)\overline{W_J(q',m',\tau')}\frac{(RN)^{-i\tau/2+i\tau'/2}}{q^{-i\tau}q'^{i\tau'}p^{-i(\tau-\tau')(1-\delta/2)}}\left(-\frac{(t+\tau/2)qp}{2\pi eNm}\right)^{-i(t+\tau/2)} \\ &\left(-\frac{(t+\tau'/2)q'p}{2\pi eNm'}\right)^{i(t+\tau'/2)}U^{\dagger}\left(\frac{nR}{qq'p^{1-\delta}}, -\frac{i\tau}{2} + \frac{i\tau'}{2}\right) d\tau d\tau' 
\end{split}
\end{equation}
where
\begin{equation*}
W_J(q,m,\tau) = \frac{1}{(t+\tau/2)^{1/2}}W_J(\tau)\left(-\frac{(t+\tau/2)qp}{2\pi eNm}\right)^{3/2} V\left(-\frac{(t+\tau/2)qp}{2\pi Nm}\right) \int_0^1 V\left(\frac{\tau}{2K} - \frac{(t+\tau/2)x}{Kma}\right) dx
\end{equation*}
Since $u^{3/2}V(u)\ll 1$ and $\tau\ll J\ll t^{1-\epsilon}$, it follows that
\begin{equation}
\frac{\partial}{\partial\tau}W_J(q,m,\tau) \ll \frac{1}{t^{1/2}|\tau|}
\end{equation}
We also note that the $x$-integral inside the expression of $W_J(q,m,\tau)$ contributes a factor of the size of its length, which is $\ll Kma/(t+\tau)$. Since $m\ll Cpt^{1+\varepsilon}/N$ and $\tau\ll t$, the contribution is $\ll KCpQt^{\varepsilon}/N$. Therefore $W_{J}(q,m,\tau)\ll K^{1/2}p^{1/2}C/t^{1/2}N^{1/2}$.
Like before, we analyze the integral $\mathfrak{K}$ in two cases, when $n=0$ and when $n\neq0$. For $n=0$, the congruence condition implies $q=q'$, and the bound on $U^{\dagger}$ gives arbitrary saving for $|\tau-\tau'|\gg t^{\epsilon}$. In this case,
\begin{equation*}
\mathfrak{K}\ll \frac{|c_4|^2}{K^2}\underset{|\tau|\ll (NK)^{1/2}/Cp^{1/2}}{\int}|\gamma(1+i\tau)|^2W_J(q,m,\tau)\int_{|\tau'-\tau|\ll t^{\varepsilon}}W_J(q,m',\tau')d\tau' d\tau \ll \frac{t^{\epsilon}Cp^{1/2}}{K^{1/2}N^{1/2}t} =: B^*(C,0)
\end{equation*}
When $n\neq0$, 
\begin{equation}
\begin{split}
U^{\dagger}\left(\frac{nR}{qq'p^{1-\delta}}, -\frac{i\tau}{2} + \frac{i\tau'}{2}\right) = \frac{c_5}{(\tau-\tau')^{1/2}} &U\left(\frac{(\tau-\tau')qq'p^{1-\delta}}{4\pi nR}\right)\left(\frac{(\tau-\tau')qq'p^{1-\delta}}{4\pi enR}\right)^{-i\tau/2+i\tau'/2} \\&+ O\left(\min\left\lbrace\frac{1}{|\tau-\tau'|^{3/2}}, \frac{C^3p^{3(1-\delta)/2}}{(|n|R)^{3/2}}\right\rbrace\right)
\end{split}
\end{equation}
for some absolute constant $c_5$.

Contribution of the error term towards $\mathfrak{K}$ is of the order of
\begin{equation*}
\frac{t^{\epsilon}}{K^2}\underset{[J,4J/3]^2}{\int\int} \frac{1}{t} \min\left\lbrace\frac{1}{|\tau-\tau'|^{3/2}}, \frac{C^3p^{3(1-\delta)/2}}{(|n|R)^{3/2}}\right\rbrace d\tau d\tau'
\end{equation*}
When the second term is smaller,
\begin{equation}
\frac{t^{\epsilon}}{K^2}\underset{\begin{subarray}{c} [J,4J/3]^2 \\ |\tau-\tau'|\ll |n|R/C^2p^{1-\delta}\end{subarray}}{\int\int} \frac{1}{t} \frac{C^3p^{3(1-\delta)/2}}{(|n|R)^{3/2}} d\tau d\tau' \ll \frac{1}{K^{3/2}t}\frac{N^{1/2}}{(|n|R)^{1/2}p^{\delta/2}}t^{\epsilon}.
\end{equation}
When the first term is smaller,
\begin{equation}
\begin{split}
\frac{t^{\epsilon}}{K^2}\underset{\begin{subarray}{c} [J,4J/3]^2 \\ |\tau-\tau'|\gg |n|R/C^2p^{1-\delta}\end{subarray}}{\int\int} \frac{1}{t} \frac{1}{|\tau-\tau'|^{3/2}} d\tau d\tau' & \ll \frac{t^{\epsilon}}{K^2t} \frac{Cp^{(1-\delta)/2}}{(|n|R)^{1/2}} \underset{[J,4J/3]^2}{\int\int}\frac{1}{|\tau-\tau'|^{1-\epsilon}}d\tau d\tau' \\ & \ll \frac{1}{K^{3/2}t}\frac{N^{1/2}}{(|n|R)^{1/2}p^{\delta/2}}t^{\varepsilon}.
\end{split}
\end{equation}
Therefore the error contribution (for $n\neq0$) is
\begin{equation*}
B^*(C,n) = \frac{1}{K^{3/2}t}\frac{N^{1/2}}{(|n|R)^{1/2}p^{\delta/2}}t^{\varepsilon}
\end{equation*}

We finally analyze the main term for the case $n\neq0$. Striling's formula is
\begin{equation*}
\Gamma(\sigma+i\tau) = \sqrt{2\pi}(i\tau)^{\sigma-1/2}e^{-\pi|\tau|/2}\left(\frac{|\tau|}{e}\right)^{i\tau}\left\lbrace 1+ O\left(\frac{1}{|\tau|}\right) \right\rbrace
\end{equation*}
as $|\tau|\rightarrow\infty$. That gives
\begin{equation}
\gamma(1+i\tau) = \left(\frac{|\tau|}{4\pi e}\right)^{i\tau} \Phi(\tau), \quad \textit{where } \Phi'(\tau)\ll \frac{1}{|\tau|}
\end{equation}

By Fourier inversion, we write
\begin{equation*}
\left(\frac{4\pi nR}{(\tau-\tau')qq'p^{1-\delta}}\right)^{1/2} U\left(\frac{(\tau-\tau')qq'p^{1-\delta}}{4\pi nR}\right) = \int_{\R} U^{\dagger}(r,1/2)e\left(\frac{(\tau-\tau')qq'p^{1-\delta}}{4\pi nR}r\right) dr
\end{equation*}
We conclude that for some constant $c_6$ (depending on the sign of $n$)
\begin{equation}\label{Kfinal}
\mathfrak{K} = \frac{c_6}{K^2}\left(\frac{qq'p^{1-\delta}}{|n|R}\right)^{1/2}\int_{\R}U^{\dagger}(r,1/2)\underset{\R^2}{\int\int} g(\tau,\tau')e(f(\tau,\tau'))d\tau d\tau' dr + O(B^*(C,n))
\end{equation}
where
\begin{equation*}
\begin{split}
2\pi f(\tau,\tau') = & \tau\log\left(\frac{\tau}{4\pi e}\right) - \tau'\log\left(\frac{\tau'}{4\pi e}\right) - \frac{(\tau-\tau')}{2}\log(RN) + \tau\log q - \tau'\log q' + (\tau-\tau')(1-\delta/2)\log p \\ & -(t+\tau/2)\log\left(-\frac{(t+\tau/2)qp}{2\pi eNm}\right) + (t+\tau'/2)\log\left(-\frac{(t+\tau'/2)q'p}{2\pi eNm'}\right) \\ &+\frac{(\tau-\tau')}{2}\log\left(\frac{(\tau-\tau')qq'p^{1-\delta}}{4\pi enR}\right) + \frac{(\tau-\tau')qq'p^{1-\delta}}{2nR}r
\end{split}
\end{equation*}
and
\begin{equation*}
g(\tau,\tau') = \Phi(\tau)\overline{\Phi(\tau')}W_J(q,m,\tau)W_J(q',m',\tau')
\end{equation*}
We intend to use the second derivative bound as given in Lemma \ref{2nd.der.lem}. For that, we need the following
\begin{equation*}
2\pi\frac{\partial^2}{\partial\tau^2}f(\tau,\tau') = \frac{1}{4}\left(\frac{4}{\tau} - \frac{1}{(t+\tau/2)} + \frac{2}{(\tau'-\tau)}\right), \quad 2\pi\frac{\partial^2}{\partial\tau'^2}f(\tau,\tau') = \frac{1}{4}\left(\frac{-4}{\tau'} + \frac{1}{(t+\tau'/2)} + \frac{2}{(\tau'-\tau)}\right)
\end{equation*}
and
\begin{equation*}
2\pi\frac{\partial^2}{\partial\tau'\partial\tau}f(\tau,\tau') = \frac{-1}{4}\left(\frac{2}{\tau'-\tau}\right)
\end{equation*}
Also, by explicit computation,
\begin{equation*}
4\pi^2\left[ \frac{\partial^2}{\partial\tau^2}f(\tau,\tau')\frac{\partial^2}{\partial\tau'^2}f(\tau,\tau') - \left(\frac{\partial^2}{\partial\tau'\partial\tau}f(\tau,\tau')\right)^2 \right] = -\frac{1}{2\tau\tau'} + O\left(\frac{1}{tJ}\right)
\end{equation*}
for $\tau,\tau'$ such that $g(\tau,\tau')\neq0$. So the conditions of lemma 4 of Munshi \cite{munshi} hold with $r_1=r_2=1/J^{1/2}$. To calculate the total variation of $g(\tau,\tau')$, recall that $\Phi'(\tau)\ll |\tau|^{-1}$ and $W'_J(q,m,\tau)\ll t^{-1/2}|\tau|^{-1}$. So $var(g)\ll t^{-1+\epsilon}$. So the double integral in (\ref{Kfinal}) over $\tau,\tau'$ is bounded by $O(Jt^{-1+\epsilon})$. Integrating trivially over $r$ using the rapid decay of the Fourier transform, we get that total contribution of the leading term in (\ref{Kfinal}) towards $\mathfrak{K}$ is bounded by
\begin{equation*}
O\left( \frac{1}{K^2} \frac{Cp^{(1-\delta)/2}}{(|n|R)^{1/2}} \frac{(NK)^{1/2}}{Cp^{1/2}}t^{-1+\epsilon}\right) = O(B^*(C,n))
\end{equation*}

Putting everything together, we get the final bound
\begin{equation*}
\begin{split}
S_{1,J,\delta,L_1}(N,C,R) & \ll_D \frac{t^{\epsilon}K}{NC^2p}\bigg[\frac{C^2p^{1-\delta}t^2}{N^2}B^*(C,0) + \frac{C^2p^{1-\delta}t^2}{N^2}\sum_{n\ll C(NK)^{1/2}p^{1/2-\delta}t^\varepsilon/R}B^*(C,n) \bigg] \\ & = \frac{t^{\epsilon}K}{NC^2p} \bigg[\frac{C^3tp^{3/2-\delta}}{N^{5/2}K^{1/2}} + \frac{C^{5/2}tp^{5/4-2\delta}}{(NK)^{5/4}R}\bigg]
\end{split}
\end{equation*}

That gives
\begin{equation*}
\begin{split}
S_{1,J}(N,C) & \ll_D t^{\epsilon}N^{1/2}K\sum_{\delta\in\{0,1\}}\underset{\begin{subarray}{c}L_1|L\\ p^{1-\delta}|L_1\end{subarray}}{\sum} \underset{dyadic}{\sum_{1\leq R\ll KDpt^{\epsilon}}} R^{1/2} \frac{K^{1/2}}{N^{1/2}Cp^{1/2}} \left[\frac{C^{3/2}t^{1/2}p^{3/4-\delta/2}}{N^{5/4}K^{1/4}} + \frac{C^{5/4}t^{1/2}p^{5/8-\delta}}{(NK)^{5/8}R^{1/2}} \right] \\ &\ll_D t^{\epsilon}K^{3/2}\left( \frac{K^{1/4}C^{1/2}t^{1/2}p^{3/4}}{N^{5/4}} + \frac{C^{1/4}t^{1/2}p^{1/8}}{(NK)^{5/8}} \right)
\end{split}
\end{equation*}
Multiplying by $N^{1/2}/K$ and summing over the dyadic ranges of $J\in\mathcal{J}$ and $C\ll Q$, we get
\begin{equation}\label{S_1}
\frac{S_1(N)}{N^{1/2}} \ll_D t^{1/2+\varepsilon}p^{1/4} \left( \frac{K^{1/2}p^{1/4}}{N^{1/2}} + \frac{1}{(Kp)^{1/4}} \right)
\end{equation}
%This beats convexity for $N^\varepsilon\ll K\ll N^{1-\varepsilon}$, with a maximum saving of $t^{1/8}$ when $K=t/N^{1/2}$.
Finally, from equations (\ref{S_2}) and (\ref{S_1}), it follows that for $N\ll t^{1+\epsilon}$ and $K\gg N^{1/2}/p^{1/2}$,
\begin{equation*}
\frac{S(N)}{N^{1/2}}\ll_D t^{1/2+\varepsilon}p^{1/4}\left(\frac{K^{1/2}p^{1/4}}{N^{1/2}}+\frac{N^{1/4}}{K^{3/4}p^{1/2}}+ \frac{K^{1/2}p^{1/4}}{N^{1/2}} + \frac{1}{K^{1/4}p^{1/4}} \right).
\end{equation*}
The optimal choice for $K$ occurs at $K=(N/p)^{2/3}$ and we get Proposition \ref{mainprop}.

\begin{appendices}
\section{Voronoi formula for CM holomorphic cusp forms of squarefree level}

Let $K=\BQ(\sqrt{-D})$ be an imaginary quadratic field and $\psi$ be a \Gros of $K$ defined mod $\mathfrak{f}$ and has weight $r$. Define the $q$-series 
\begin{equation*}
f_\psi(z)=\underset{\begin{subarray}{c}\fra \textit{ integral }\\ \textit{ coprime to } \mathfrak{f}\end{subarray}}{\sum}\psi(\fra)q^{\calN(\fra)} \quad (q=e^{2\pi iz}, Im(z)>0).
\end{equation*}
Let $\chi$ be the Dirichlet character of conductor $\calN\mathfrak{f}$ attached to $\mathfrak{f}$, and $\chi_K$ be the field character, $\chi_K(p)=\left(\frac{disc(K)}{p}\right)$ defined on odd primes $p$ coprime with $disc(K)$, and extended to $\BZ$ by linearity. Hecke proved that $f_\psi$ is a cusp form of weight $r+1$, level dividing $disc(K)\calN(\mathfrak{f})$ and nebetypus $\chi\chi_K$. Shimura pointed out that $f_\psi$ is indeed a newform if $\psi$ has conductor $\mathfrak{f}$. The cusp forms $f_\psi$ have CM by $\chi_K$. However it isn't obvious that these are all the cusp forms with CM in $S_{r+1}(\Gamma_1(N)$. Ribet used the theory of Galois representations to prove that $f$ has CM by an imaginary quadratic field $K$ if and only if it arises from a Hecke character of $K$. We shall use this correspondence in our proof of Voronoi summation formula.

We start by recalling that the ring of integers $\mathcal{O}_K$ of $K$ is $\BZ[\alpha]$ where 
\[
\alpha=
\begin{cases}
\sqrt{-D} & \textit{ when } -D\equiv2,3\mod4 \\
\frac{1+\sqrt{-D}}{2} & \textit{ when } -D\equiv1\mod4.
\end{cases}
\]
Let $\mu(K)$ be the roots of unity in $K$, which is also the group of units of $\calO_K$, and $\omega_K=|\mu(K)|$. Let $Cl(K)$ be the class group of $K$. The different $\frd$ of $K$ is the ideal $\sqrt{-D}\mathcal{O}_K$ with $\calN\frd = D$. The real character $\chi_D$ of conductor $D$ given by $\chi_D(n)= \left(\frac{-D}{n}\right)$ is called the field character. The value of $\chi_D(p)$ determines the splitting of a prime $p$ in $\mathcal{O}_K$; we have $p=\frp^2,\frp,\frp\bar{\frp}$ with $\frp\neq\bar{\frp}$ for $\chi_D(p)=0,-1,1$ respectively.

Let $p>2$ and $p=\frp\bar{\frp}$ be a split prime in $K$. Since $\calN\frp=p$, there is an isomorphism $\Theta:\mathcal{O}_K/\frp\rightarrow\BZ/p\BZ$. Let $g(x)=x^2+D$. By Dedekind's theorem, $g(x)\equiv h(x)h'(x)\mod p$, splits into two distinct (linear) factors modulo $p$. Let $h(x)=x-d$, and let $\frp$ be generated by $\{p, h(\alpha)\}$ over $\mathcal{O}_K$. When $-D\equiv2,3\mod4$, one possible isomorphism $\Theta$ is given by $\Theta(a+b\sqrt{-D})=a+bd\bmod p$, where $a,b\in\BZ$. When $-D\equiv1\mod4$, a possible isomorphism is $\Theta\left(\frac{a+b\sqrt{-D}}{2}\right)=(a+bd)\bar{2}\bmod p$, where $\bar{2}$ is the inverse of $2$ modulo $p$.

Let $\lambda(n)=\sum_{\calN\fra=n}\psi(\fra)$. Our aim is to find a Voronoi summation formula for
\begin{equation}
\sum_{n\geq1}\lambda(n)e\left(\frac{mn}{q}\right)V\left(\frac{n}{N}\right)
\end{equation}
where $(m,q)=1$ and $V$ is a compactly supported smooth function with bounded derivatives, supported away from $0$. This sum can be rewritten as a sum over the integral ideals,
\begin{equation}\label{psisum}
\sum_{\fra\subset\calO_K}\psi(\fra)e\left(\frac{m\calN\fra}{q}\right)V\left(\frac{\calN\fra}{N}\right).
\end{equation}
We break this sum into ideal classes. Each ideal class contains a prime factor $\frl$ of an odd split prime $\ell$ coprime with $p$ and $m$, and we denote this class by $[\frl]$. Such an $\frl$ exists since any imaginary field is a Galois extension of $\BQ$. Note that $\calN\frl=\ell$. There is a correspondence,
\begin{equation*}
\begin{split}
&[\frl]\-\leftrightarrow\frl/\units\\
&\fra\mapsto\fra\frl=(\gamma).
\end{split}
\end{equation*}
Then $\psi(\fra)=\psi(\gamma)/\psi(\frl)$ and $\calN\fra=\calN\gamma/\ell$. Expression (\ref{psisum}) can be written as,
\begin{equation}\label{idealsum}
\begin{split}
&\sum_{[\frl]\-\in Cl(K)}\frac{1}{\psi(\frl)}\sum_{\gamma\in\frl/\mu(K)}\psi(\gamma)e\left(\frac{m\calN\gamma}{q\ell}\right)V\left(\frac{\calN\gamma}{N\ell}\right)\\
=&\frac{1}{\omega_K}\sum_{[\frl]\-\in Cl(K)}\frac{1}{\psi(\frl)}\sum_{\gamma\in\frl}\psi(\gamma)e\left(\frac{m\calN\gamma}{q\ell}\right)V\left(\frac{\calN\gamma}{N\ell}\right).
\end{split}
\end{equation}
For brevity of notation, we define
\begin{equation}\label{frlsum}
S_\frl=\sum_{\gamma\in\frl}\psi(\gamma)e\left(\frac{m\calN\gamma}{q\ell}\right)V\left(\frac{\calN\gamma}{N\ell}\right).
\end{equation}
In our approach, we need an explicit choice of basis for the $\BZ$-lattice formed by the ideal $\frl$. Since the norm $\calN$ gives a quadratic form as opposed to a linear form, a good choice of basis is crucial in simplifying the calculations. Let $m(x)$ be the minimal polynomial of $\alpha$ and $\ell$ be an odd split prime, say $(\ell)=\frl\frl'$. by Dedekind's theorem, $m(x)\equiv m_1(x)m_2(x)\bmod\ell$ factors into linear polynomials and $\frl$ is generated by $\{\ell, m_1(\alpha)\}$ over $\calO_K$.

\begin{lem}
$\{\ell, m_1(\alpha)\}$ is a $\BZ$-basis of $\frl$.
\end{lem}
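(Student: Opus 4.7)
The plan is to compare indices in the lattice $\calO_K = \BZ \oplus \BZ\alpha$. Write the minimal polynomial as $m(x) = x^2 + ux + v$ with $u,v \in \BZ$, and factor $m(x) \equiv (x-c)(x-c') \pmod{\ell}$ with $c,c' \in \BZ$ distinct modulo $\ell$, so that $m_1(x) = x-c$. Let $L \subseteq \calO_K$ denote the $\BZ$-submodule spanned by $\ell$ and $m_1(\alpha) = \alpha - c$.

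In the $\BZ$-basis $\{1, \alpha\}$ of $\calO_K$, the two proposed generators of $L$ have coordinate vectors $(\ell, 0)$ and $(-c, 1)$. The corresponding $2\times 2$ matrix has determinant $\ell \ne 0$, which yields two facts at once: first, $\{\ell, m_1(\alpha)\}$ is $\BZ$-linearly independent and is therefore automatically a $\BZ$-basis of the rank-$2$ sublattice $L$ that it generates; second, $[\calO_K : L] = |\det| = \ell$. On the other hand, since $\ell$ splits we have $\calO_K/\frl \cong \BF_\ell$, so $[\calO_K : \frl] = \ell$ as well. The inclusion $L \subseteq \frl$ is immediate from $\ell, m_1(\alpha) \in \frl$, and equality of the two finite indices forces $L = \frl$. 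This proves that $\{\ell, m_1(\alpha)\}$ is a $\BZ$-basis of $\frl$.

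The one subtlety, should one wish to avoid invoking the residue-field identification $\calO_K/\frl \cong \BF_\ell$, is to verify the reverse inclusion $\frl \subseteq L$ by hand. Expanding an arbitrary element $\ell\beta + m_1(\alpha)\gamma$ with $\beta, \gamma \in \calO_K$ in the basis $\{1,\alpha\}$ reduces this to checking that the two cross terms $\ell \cdot \alpha$ and $m_1(\alpha)\cdot \alpha$ both lie in $L$. The first is trivial: $\ell\alpha = c\ell + \ell \cdot m_1(\alpha)$. For the second, apply $\alpha^2 = -u\alpha - v$ to get $(\alpha - c)\alpha = -(u+c)\alpha - v$, and seek integers $A, B$ with $A\ell + B(\alpha - c) = -(u+c)\alpha - v$. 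Matching coefficients yields $B = -(u+c)$ and $A\ell = -m(c)$; since $m_1(c) = 0$ and $m_1 \mid m$ modulo $\ell$, we have $\ell \mid m(c)$, so $A \in \BZ$. This completes the direct verification and provides a self-contained alternative to the index comparison.

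I expect the main obstacle to be purely bookkeeping rather than conceptual: the index/determinant comparison is clean, but writing the cross-term calculation requires keeping track of the sign conventions in the minimal polynomial and in the Dedekind-style factorization of $m$ modulo $\ell$. Either route gives a rank-$2$ $\BZ$-basis of $\frl$ with the feature that one generator is a rational integer and the other is an $\calO_K$-translate of $\alpha$, which is the form needed for the subsequent lattice-point analysis in the Voronoi derivation.
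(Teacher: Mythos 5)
Your proof is correct, but your primary route differs from the paper's. The paper argues by direct element-wise verification: it writes the $\BZ$-span of $\{\ell, m_1(\alpha)\}$ explicitly as $\Gamma=\{x+y\alpha : \ell \mid x+yc_\ell\}$, expands an arbitrary $\calO_K$-linear combination $(a_1+b_1\alpha)\ell+(a_2+b_2\alpha)(\alpha-c_\ell)$ in the basis $\{1,\alpha\}$, and checks that the congruence holds because $m(c_\ell)\equiv 0\bmod\ell$ — essentially your ``subtlety'' paragraph, except that you streamline it by reducing to the two cross terms $\ell\alpha$ and $m_1(\alpha)\alpha$ rather than expanding everything. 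Your main argument, by contrast, compares indices: $\det\begin{pmatrix}\ell&0\\-c&1\end{pmatrix}=\ell$ gives $[\calO_K:L]=\ell$, while $\calN\frl=\ell$ gives $[\calO_K:\frl]=\ell$, and $L\subseteq\frl$ forces equality. The index comparison is cleaner and generalizes immediately (it shows any two elements of $\frl$ whose coordinate matrix has determinant $\pm\calN\frl$ form a basis), at the cost of invoking the norm-equals-index fact; the paper's computation is more self-contained and makes the congruence description of the lattice explicit, which is exactly the form used in the subsequent Corollary and in the Voronoi derivation. Both are valid; your determinant argument also supplies the linear independence that the paper leaves implicit.
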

\begin{proof}
Let $\Gamma$ be a lattice generated by $\{\ell, m_1(\alpha)\}$ over $\BZ$. Then $\Gamma=\{u\ell+vm_1(\alpha) | u,v\in\BZ\}$. Let $m_1(x)=x-c_\ell$. Then the lattice is given by $\{(u\ell-vd_\ell)+v\alpha | u,v\in\BZ\}$. Equivalently, $\Gamma=\{x+y\alpha | x+yc_\ell \textit{ is divisible by } \ell\}$. 

Next, let $m(x)=x^2-ax-b$ be the minimal polynomial of $\alpha$. Since $\frl$ is generated by $\ell$ and $m_1(\alpha)$ over $\calO_K$, an element $\beta\in\frl$ can be written as a linear combination,
\begin{equation*}
\begin{split}
\beta &= (a_1+b_1\alpha)\ell + (a_2+b_2\alpha)(\alpha-c_\ell)\\
&= (a_1\ell-a_2c_\ell+b_2b)+(b_1\ell+a_2-b_2c_\ell+ab_2)\alpha
\end{split}
\end{equation*}
Then $(a_1\ell-a_2c_\ell+b_2b)+c_\ell(b_1\ell+a_2-b_2c_\ell+ab_2)=(a_1+b_1c_\ell)\ell-(c_\ell^2-ac_\ell-b)b_2\equiv0\bmod\ell$. Therefore $\beta\in\Gamma$.
\end{proof}
\begin{notation}
Since $\ell$ splits, $x^2+D=h_\ell(x)h_\ell'(x)$ splits into linear factors modulo $\ell$. Say $\frl$ is generated by $\{\ell,h_\ell(\alpha)\}$ over $\calO_K$. We define $d_\ell$ to be integer such that $h_\ell(x)=x-d_\ell$.
\end{notation}
\begin{cor}
The lattice of ideal $\frl$ is given by,
\begin{equation}
\Gamma_\ell=\bigg\lbrace a+b\sqrt{-D}\mid a, b\in\BZ, a+bd_\ell \textit{ is divisible by } \ell\bigg\rbrace \quad \textit{ when } -D\equiv 2, 3\bmod4
\end{equation}
and
\begin{equation}
\Gamma_\ell=\bigg\lbrace\frac{a+b\sqrt{-D}}{2}\mid a, b\in\BZ, a\equiv b\bmod2, a+bd_\ell \textit{ is divisible by } \ell\bigg\rbrace \quad \textit{ when } -D\equiv1\bmod4.
\end{equation}
\end{cor}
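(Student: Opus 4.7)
The plan is to derive the corollary directly from the preceding lemma (which gives $\{\ell, m_1(\alpha)\}$ as a $\BZ$-basis of $\frl$) by rewriting the description of $\Gamma_\ell$ in the coordinates $(a,b)$ used in the statement. Writing $m_1(x) = x - c_\ell$ with $c_\ell$ a root of the minimal polynomial $m(x)$ of $\alpha$ modulo $\ell$, the preceding lemma gives
\[
\Gamma_\ell = \{x + y\alpha \mid x, y \in \BZ,\ x + yc_\ell \equiv 0 \bmod \ell\}.
\]
The two congruence cases differ because $m(x) = x^2 + D$ when $-D \equiv 2, 3 \bmod 4$, whereas $m(x) = x^2 - x + (1+D)/4$ when $-D \equiv 1 \bmod 4$, so the relation between $c_\ell$ and the integer $d_\ell$ (a root of $x^2+D$ modulo $\ell$) has to be handled in each case separately.

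For $-D \equiv 2, 3 \bmod 4$ we have $\alpha = \sqrt{-D}$ and $m(x) = x^2 + D$, giving $c_\ell = d_\ell$ directly. Setting $a = x$ and $b = y$ immediately yields the first formula. For $-D \equiv 1 \bmod 4$ we have $\alpha = (1+\sqrt{-D})/2$, and every element of $\calO_K$ can be written uniquely as $(a+b\sqrt{-D})/2$ with $a, b \in \BZ$ satisfying $a \equiv b \bmod 2$; the translation with $x + y\alpha$ is $a = 2x+y$, $b = y$. Squaring $2c_\ell - 1$ and reducing modulo $\ell$ in the equation $c_\ell^2 - c_\ell + (1+D)/4 \equiv 0 \bmod \ell$ gives $(2c_\ell - 1)^2 \equiv -D \bmod \ell$, so $c_\ell \equiv (1+d_\ell)\overline{2} \bmod \ell$ for the sign consistent with the prime $\frl$ (equivalently, with the isomorphism $\Theta$ fixed in the introduction).

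Substituting this expression for $c_\ell$ into the divisibility condition $x + yc_\ell \equiv 0 \bmod \ell$ and multiplying through by $2$ (valid since $\ell$ is odd) yields $(a-b) + b(1+d_\ell) \equiv a + bd_\ell \equiv 0 \bmod \ell$, which is exactly the condition in the second formula. The only mildly delicate point is the sign choice in $2c_\ell - 1 \equiv \pm d_\ell \bmod \ell$; but this is fixed by declaring $d_\ell$ to be the root corresponding to $\frl$ (rather than to its conjugate $\frl'$) under $\Theta$, and with that convention fixed throughout, the corollary reduces to a direct change of variables from the basis description in the preceding lemma.
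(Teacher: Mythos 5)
Your proof is correct and is exactly the route the paper intends: the corollary is stated without proof as an immediate consequence of the preceding lemma, via the change of coordinates from $x+y\alpha$ to $(a+b\sqrt{-D})/2$ and the relation $2c_\ell-1\equiv d_\ell\bmod\ell$ between the root of the minimal polynomial of $\alpha$ and the root of $x^2+D$. Your explicit handling of the sign convention for $d_\ell$ (tying it to $\frl$ rather than $\frl'$) is a welcome clarification of a point the paper's notation leaves implicit.
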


Using the definition of $\psi$,
\begin{equation*}
S_\frl=\frac{1}{\ell}\sum_{\xi\mod\ell}\sum_{\begin{subarray}{c}(c,f)\in\BZ^2\end{subarray}}\chi(c+fd)\left(\frac{c+f\sqrt{-D}}{\sqrt{c^2+f^2D}}\right)^re\left(\frac{m(c^2+f^2D)}{q\ell}\right)e\left(\frac{(c+fd_\ell)\xi}{\ell}\right)V\left(\frac{c^2+f^2D}{N\ell}\right)
\end{equation*}
when $-D\equiv2,3\mod4$, while it is
\begin{equation*}
\begin{split}
S_\frl=\frac{1}{\ell}\sum_{\xi\mod \ell}\sum_{\begin{subarray}{c}(c,f)\in\BZ^2\end{subarray}}&\chi((c+fd)\bar{2})\left(\frac{c+f\sqrt{-D}}{\sqrt{c^2+f^2D}}\right)^re\left(\frac{m(c^2+f^2D)}{4q\ell}\right)e\left(\frac{(c+fd_\ell)\xi}{\ell}\right)V\left(\frac{c^2+f^2D}{4N\ell}\right)\\&\times\left(1+e\left(\frac{c+f}{2}\right)\right)
\end{split}
\end{equation*}
when $-D\equiv1\mod4$. The analysis of the two cases is similar, so we only present the details for the case $-D\equiv3\mod4$. We break the $(c,f)$-sum modulo $qp\ell$ by changing variables $c\mapsto\beta+qp\ell c$ and $f\mapsto\gamma+qp\ell f$,
\begin{equation*}
\begin{split}
S_\frl=\frac{1}{\ell}\sum_{\xi\mod\ell}\sum_{\begin{subarray}{c}(c,f)\in\BZ^2\end{subarray}}\sum_{\beta,\gamma\mod qp\ell}&\chi(\beta+\gamma d)\left(\frac{(\beta+qp\ell c)+(\gamma+qp\ell f)\sqrt{-D}}{\sqrt{Norm}}\right)^re\left(\frac{m(\beta^2+\gamma^2D)}{q\ell}\right)\\
&\times e\left(\frac{(\beta+\gamma d_\ell)\xi}{\ell}\right)V\left(\frac{Norm}{N\ell}\right)
\end{split}
\end{equation*}
where $Norm=\calN[(\beta+qp\ell c)+(\gamma+qp\ell f)\sqrt{-D}]$. Application of Poisson summation formula to the $(c,f)$-sum followed by a change of variables gives,
\begin{equation}\label{arithint}
\begin{split}
S_\frl=&\sum_{\begin{subarray}{c}(c,f)\in\BZ^2\end{subarray}}\frac{1}{\ell}\sum_{\xi\mod \ell}\frac{1}{(qp\ell)^2}\sum_{\beta,\gamma\mod qp\ell}\chi(\beta+\gamma d)e\left(\frac{(\beta+\gamma d_\ell)\xi}{\ell}\right)e\left(\frac{m(\beta^2+\gamma^2D)}{q\ell}\right)e\left(\frac{c\beta+f\gamma}{qp\ell}\right) \\
&\times\int\int\left(\frac{z+y\sqrt{-D}}{\sqrt{z^2+y^2D}}\right)^r V\left(\frac{z^2+y^2D}{N\ell}\right)e\left(\frac{-cz-fy}{qp\ell}\right)dzdy\\
=&\sum_{(c,f)\in\BZ^2}\calA(mp,(c,f);q)\CJ((c,f);q).
\end{split}
\end{equation}
Here $\calA(mp,(c,f);q)$ is the `arithmetic part',
\begin{equation}\label{arith}
\calA(mp,(c,f);q)=\frac{1}{\ell}\sum_{\xi\mod\ell}\frac{1}{(qp\ell)^2}\sum_{\beta,\gamma\mod qp\ell}\chi(\beta+\gamma d)e\left(\frac{(\beta+\gamma d_\ell)\xi}{\ell}\right)e\left(\frac{m(\beta^2+\gamma^2D)}{q\ell}\right)e\left(\frac{c\beta+f\gamma}{qp\ell}\right)
\end{equation}
and $\CJ((c,f);q)$ is the `analytic part',
\begin{equation}
\CJ((c,f);q)= \int\int\left(\frac{z+y\sqrt{-D}}{\sqrt{z^2+y^2D}}\right)^r V\left(\frac{z^2+y^2D}{N\ell}\right)e\left(\frac{-cz-fy}{qp\ell}\right)dzdy.
\end{equation}
\subsection{Analytic part}
A change of variables $z\mapsto z\sqrt{N\ell}, y\mapsto y\sqrt{N\ell/D}$ followed by converting to polar coordinates shows,
\begin{equation*}
\begin{split}
\CJ((c,f);q)&=\frac{N\ell}{2\sqrt{D}}\int_0^\infty\int_0^{2\pi}e^{ir\theta}V(R)e\left(\frac{-\sqrt{NR}}{qp\sqrt{\ell D}}(c\sqrt{D}\cos\theta+f\sin\theta)\right)d\theta dR\\
&=\frac{N\ell e^{-ir\varphi}}{2\sqrt{D}}\int_0^\infty V(R)\int_0^{2\pi}e^{ir\theta}e\left(\frac{-\sqrt{NR(c^2D+f^2)}}{qp\sqrt{\ell D}}\sin\theta\right)d\theta dR,
\end{split}
\end{equation*}
where $\tan\varphi=c\sqrt{D}/f$. The $\theta$-integral gives a $J$-Bessel function \cite{grry6},
\begin{equation}\label{analytic}
\CJ((c,f);q)=\frac{N\ell e^{-ir\varphi}(-1)^r\pi}{\sqrt{D}}\int_0^\infty V(R)J_r\left(\frac{2\pi\sqrt{NR(c^2D+f^2)}}{qp\sqrt{\ell D}}\right)dR.
\end{equation}

\subsection{Arithmetic part}
We separate the $\beta$ and $\gamma$ sums, compute those separately and finally take their product. By the Gauss formula,
\begin{equation*}
\chi(a)=\frac{1}{\tau(\bar{\chi})}\sum_{b\in\BF_{p}}\bar{\chi}(b)e\left(\frac{ab}{p}\right).
\end{equation*}
Then,
\begin{equation*}
\begin{split}
\calA(mp,(c,f);q)=&\frac{1}{\tau(\bar{\chi})}\sum_{b\mod p}\bar{\chi}(b)\frac{1}{\ell}\sum_{\xi\mod\ell}\frac{1}{(qp\ell)^2}\sum_{\beta,\gamma\mod qp\ell}e\left(\frac{(\beta+\gamma d)b}{p}\right)e\left(\frac{(\beta+\gamma d_\ell)\xi}{\ell}\right)\\&\times e\left(\frac{m(\beta^2+\gamma^2D)}{q\ell}\right)e\left(\frac{c\beta+f\gamma}{qp\ell}\right)\\
=&\frac{1}{\tau(\bar{\chi})}\sum_{b\mod p}\bar{\chi}(b)\frac{1}{\ell}\sum_{\xi\mod \ell}\calB(\xi, b, mp, c; q)\calB(d_\ell\xi, bd, mpD, f;q)
\end{split}
\end{equation*}
with
\begin{equation}\label{calB}
\calB(\xi, b, mp, c; q) = \frac{1}{qp\ell}\sum_{\beta\mod qp\ell}e\left(\frac{mp\beta^2+(c+qp\xi+bq\ell)\beta}{qp\ell}\right).
\end{equation}
We need a lemma about Gauss sums to compute (\ref{calB}). When $a$ is an odd integer, we define
\begin{equation*}
\varepsilon_a=
\begin{cases}
1 \quad \textit{ if } a\equiv1\bmod4\\
i \quad \textit{ if } a\equiv3\bmod4.
\end{cases}
\end{equation*}

\begin{lem}\label{gausslemma}
Let $$g(a,b,c)=\sum_{\beta\mod c}e\left(\frac{a\beta^2+b\beta}{c}\right)$$ be a Gauss sum with $(a,c)=1$ and $c>0$. If $c$ is even, let $c_1=c/2$. Then,
\begin{equation}
g(a,b,c)=
\begin{cases}
e\left(\frac{-\overline{4a}b^2}{c}\right)\left(\frac{a}{c}\right)\varepsilon_c\sqrt{c} \quad & \textit{ if } c \textit{ is odd},\\
2\delta(2\nmid b)e\left(\frac{-\overline{8a}b^2}{c_1}\right)\left(\frac{2a}{c_1}\right)\varepsilon_{c_1}\sqrt{c_1} \quad & \textit{ if } c\equiv2\bmod4,\\
\delta(2|b)e\left(\frac{-\overline{a}b^2}{4c}\right)\left(\frac{c}{a}\right)\frac{1+i}{\varepsilon_a}\sqrt{c} \quad & \textit{ if } 4|c.
\end{cases}
\end{equation}
\end{lem}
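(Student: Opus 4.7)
The proof splits into the three congruence cases for $c$, and in each case the essential move is to reduce to an evaluation of the pure quadratic Gauss sum $\sum_{\beta\bmod c} e(a\beta^2/c)$, whose value is classical. I would organize the argument as follows.

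For the odd case, the plan is to complete the square directly. Since $(a,c)=1$ with $c$ odd, both $2$ and $4a$ are invertible modulo $c$, so one can write $a\beta^2+b\beta\equiv a(\beta+\overline{2a}b)^2-\overline{4a}b^2\pmod c$. A change of variables $\beta\mapsto\beta-\overline{2a}b$ pulls out the factor $e(-\overline{4a}b^2/c)$ and leaves the pure Gauss sum $\sum_\beta e(a\beta^2/c)$, whose value $\bigl(\tfrac{a}{c}\bigr)\varepsilon_c\sqrt{c}$ is the classical Gauss evaluation.

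For $c\equiv 2\pmod 4$, write $c=2c_1$ with $c_1$ odd. The cleanest route is the Chinese Remainder Theorem: writing $\beta\leftrightarrow(\epsilon,\gamma)$ with $\epsilon=\beta\bmod 2$ and $\gamma=\beta\bmod c_1$, and using the identity $\tfrac{1}{2c_1}\equiv\tfrac12+\tfrac{\overline 2}{c_1}\pmod1$ (with $2\overline 2\equiv1\pmod{c_1}$), the exponential factors and the sum splits as
\begin{equation*}
g(a,b,2c_1)=\Bigl(\sum_{\epsilon=0}^{1}e\bigl(\tfrac{a\epsilon^2+b\epsilon}{2}\bigr)\Bigr)\cdot\sum_{\gamma\bmod c_1}e\bigl(\tfrac{\overline 2(a\gamma^2+b\gamma)}{c_1}\bigr).
\end{equation*}
The inner sum is an odd-modulus Gauss sum, handled by the first case (and the simplification $\overline{4\overline 2a}\cdot(\overline 2b)^2\equiv\overline{8a}b^2\pmod{c_1}$ yields the stated exponent). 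The outer two-term sum is $1+(-1)^{b+1}$ (using that $a$ is odd), which equals $2\delta(2\nmid b)$.

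For $4\mid c$, the plan has two sub-steps. First, I would show vanishing for odd $b$ by shifting $\beta\mapsto\beta+c/2$: a direct mod-$c$ computation gives $a(\beta+c/2)^2+b(\beta+c/2)\equiv a\beta^2+b\beta+bc/2\pmod c$, so the sum is multiplied by $e(b/2)=(-1)^b$, forcing it to vanish unless $b$ is even. When $b=2b_1$ is even, the crucial observation is that although $2$ is not invertible mod $c$, the element $a$ itself is, since $(a,c)=1$ forces $a$ odd. This lets one complete the square as $a\beta^2+2b_1\beta\equiv a(\beta+\overline a b_1)^2-\overline a b_1^2\pmod c$, extracting $e(-\overline a b^2/(4c))$ and leaving the pure Gauss sum, whose value for $4\mid c$ with $a$ odd is the known $(1+i)\varepsilon_a^{-1}\bigl(\tfrac{c}{a}\bigr)\sqrt c$.

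The only genuine obstacle is the careful bookkeeping of modular inverses in the $c\equiv 2\pmod 4$ case, where one must verify that the inverse of $4\overline 2 a$ modulo $c_1$ really does simplify to $\overline{2a}$ and hence to $\overline{8a}$ in the final exponent; and in the $4\mid c$ case, the point that $\overline a$ exists mod $c$ despite $2$ not being invertible. Everything else reduces to quoting the classical quadratic Gauss sum evaluations, which can be imported as a black box.
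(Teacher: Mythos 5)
Your proposal is correct and follows essentially the same route as the paper: complete the square for odd $c$, factor the even-modulus sum into its $2$-power and odd parts (your CRT identity is the same twisted multiplicativity the paper invokes as ``reciprocity''), detect the parity condition on $b$ by shifting $\beta$ by half the relevant modulus, and quote the classical evaluation of $\sum_\beta e(a\beta^2/c)$ as a black box. The only cosmetic difference is that for $4\mid c$ you work directly on the full sum rather than first splitting off the $2$-power factor, which changes nothing of substance.
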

\begin{proof}
We use the following result from \cite{gsums98},
\begin{equation}\label{gauss}
g(a,0,c)=
\begin{cases}
\left(\frac{a}{c}\right)\varepsilon_c\sqrt{c} \quad & \textit{ if } c \textit{ is odd},\\
\quad 0 \quad & \textit{ if } c\equiv2\bmod4,\\
\left(\frac{c}{a}\right)\frac{1+i}{\varepsilon_a}\sqrt{c} \quad & \textit{ if } 4\mid c.
\end{cases}
\end{equation}
When $c$ is odd,
\begin{equation*}
g(a,b,c)=\sum_{\beta\bmod c}e\left(\frac{a(\beta^2+2\times\overline{2a}b\beta)}{c}\right)=e\left(\frac{-\overline{4a}b^2}{c}\right)\sum_{\beta\bmod c}e\left(\frac{a\beta^2}{c}\right).
\end{equation*}
We get the second equality above by completing squares and changing variables. Using (\ref{gauss}),
\begin{equation*}
g(a,b,c)= e\left(\frac{-\overline{4a}b^2}{c}\right)\left(\frac{a}{c}\right)\varepsilon_c\sqrt{c}.
\end{equation*}
When $c$ is even, say $2^k||c$, we let $c_k=c/2^k$. Using reciprocity,
\begin{equation*}
g(a,b,c)=\sum_{\gamma\bmod2^k}e\left(\frac{ac_k\gamma^2+b\gamma}{2^k}\right)\times\sum_{\beta\bmod c_k}e\left(\frac{2^ka\beta^2+b\beta}{c_k}\right).
\end{equation*}
When $k=1$, the $\gamma$-sum equals $2\delta(2\nmid b)$. When $k>1$, a change of variables $\gamma\mapsto\gamma+2^{k-1}$ shows that $g(a,b,c)=0$ unless $2|b$. When $k=1$, we follow the previous computation to get
\begin{equation*}
g(a,b,c)=2\delta(2\nmid b)e\left(\frac{-\overline{8a}b^2}{c_1}\right)\left(\frac{2a}{c_1}\right)\varepsilon_{c_1}\sqrt{c_1}.
\end{equation*}
When $k>1$, we let $b_1=b/2$ so that
\begin{equation*}
g(a,b,c)=\delta(2|b)\sum_{\beta\bmod c}e\left(\frac{a\beta^2+2b_1\beta}{c}\right)=\delta(2|b)e\left(\frac{-\overline{a}b^2}{4c}\right)\sum_{\beta\bmod c}e\left(\frac{a\beta^2}{c}\right).
\end{equation*}
Using (\ref{gauss}),
\begin{equation*}
g(a,b,c)=\delta(2|b)e\left(\frac{-\overline{a}b^2}{4c}\right)\left(\frac{c}{a}\right)\frac{1+i}{\varepsilon_a}\sqrt{c}.
\end{equation*}
\end{proof}
Let $c_\xi=c+qp\xi$ and $f_{d_\ell\xi}=f+qpd_\ell\xi$. Let $g=(q,D)$, $D_q=D/g$ and $q_D=q/g$. If $q$ is even, let $q_1=q/2$. When $g|f$, we define $f_g=f/g$ and $f_{gd_\ell\xi}=f_{d_\ell\xi}/g$. Let $g_p=(m,p)$ and $m_p=m/g_p$.

\subsubsection{case 1: $(p,q)=1$} Changing variables $\beta\mapsto\beta+q\ell$ in (\ref{calB}) shows that $p|(c+bq\ell)$, otherwise the sum is zero. Since $(p,q)=1$, $b$ is determined modulo $p$, $b\equiv-\overline{q\ell}c\bmod p$. Then,
\begin{equation*}
\calB(\xi, b, mp, c; q) = \delta(b\equiv-\overline{q\ell}c\bmod p)\frac{1}{q\ell}g(m,\overline{p}(c+qp\xi),q\ell).
\end{equation*}
Next, we notice that $\calB(d_\ell\xi, bd, mpD, f; q)=0$ unless $g|f$. Then $\calB(d_\ell\xi, bd, mpD, f; q)=\calB(d_\ell\xi, bd, mpD_q, f_g; q_D)\times\delta(g\mid f)$. Following the calculations as above,
\begin{equation*}
\calB(d_\ell\xi, bd, mpD_q, f_g; q_D)=\delta(bd\equiv-\overline{q_D\ell}f_g\bmod p)\delta(g\mid f)\frac{1}{q_D}g(mD_q,\overline{p}(f_g+q_Dpd_\ell\xi),q_D\ell).
\end{equation*}
We take their product to calculate $\calA(mp,(c,f);q)$. There is no $b$-sum since $b$ is determined $\bmod p$.
\begin{equation}
\begin{split}
\calA(mp,(c,f);q)=&\frac{1}{\tau(\overline{\chi})}\chi(-q\ell)\overline{\chi}(c)\delta(cd-f\equiv0\bmod p){\delta(g|f)}\\
&\times\frac{1}{\ell}\sum_{\xi\bmod\ell}\frac{g}{q^2\ell^2}g(m,\overline{p}(c+qp\xi),q\ell)g(mD_q,\overline{p}(f_g+q_Dpd_\ell\xi),q_D\ell).
\end{split}
\end{equation}
Using lemma \ref{gausslemma} and the fact that $d_\ell^2+D\equiv0\bmod\ell$, we see that there is no $\xi^2$ terms in the exponential. To be explicit, when $q$ is odd,
\begin{equation*}
\begin{split}
g(m,\overline{p}(c+qp\xi),q\ell)g(mD_q,\overline{p}(f_g+q_Dpd_\ell\xi),q_D\ell)=&e\left(\frac{-\overline{4mp^2D_q}[D(c+qp\xi)^2+(f+qpd_\ell\xi)^2]}{gq\ell}\right)\\&\times\left(\frac{m}{g}\right)\left(\frac{D_q}{q_D\ell}\right)\varepsilon_{q\ell}\varepsilon_{q_D\ell}\frac{q\ell}{\sqrt{g}}.
\end{split}
\end{equation*}
Expanding the squares shows that the coefficient of $\xi^2$ is $q^2p^2(D+d_\ell^2)$, which is divisible by $gq\ell$. Executing the sum over $\xi\bmod\ell$, we get that when $q$ is odd,
\begin{equation}\label{arith1qodd}
\begin{split}
\calA(mp,(c,f);q)=&\frac{1}{\tau(\overline{\chi})}\chi(-q\ell)\overline{\chi}(c)\delta(cd-f\equiv0\bmod p)\frac{\delta(g|f)\sqrt{g}}{q\ell}\delta(\ell\mid(cd_\ell-f))\\&\times\left(\frac{m}{g}\right)\left(\frac{D_q}{q_D\ell}\right)\varepsilon_{q\ell}\varepsilon_{q_D\ell}e\left(\frac{-\overline{4mp^2D_q}(c^2D+f^2)}{gq\ell}\right).
\end{split}
\end{equation}
Similarly, when $2||q$,
\begin{equation*}
\begin{split}
g(m,\overline{p}(c+qp\xi),q\ell)g(mD_q,\overline{p}(f_g+q_Dpd_\ell\xi),q_D\ell)=&e\left(\frac{-\overline{8mp^2D_q}[D(c+qp\xi)^2+(f+qpd_\ell\xi)^2]}{gq_1\ell}\right)\\&\times\left(\frac{2m}{g}\right)\left(\frac{D_q}{q_{1D}\ell}\right)\varepsilon_{q_1\ell}\varepsilon_{q_{1D}\ell}\frac{2q\ell}{\sqrt{g}}\delta(2\nmid c,f).
\end{split}
\end{equation*}
Therefore for $2||q$,
\begin{equation}\label{arith1q2}
\begin{split}
\calA(mp,(c,f);q)=&\frac{1}{\tau(\overline{\chi})}\chi(-q\ell)\overline{\chi}(c)\delta(cd-f\equiv0\bmod p)\frac{2\delta(g|f)\sqrt{g}}{q\ell}\delta(\ell\mid(cd_\ell-f))\\&\times\left(\frac{2m}{g}\right)\left(\frac{D_q}{q_{1D}\ell}\right)\varepsilon_{q_1\ell}\varepsilon_{q_{1D}\ell}\delta(2\nmid c,f)e\left(\frac{-\overline{8mp^2D_q}(c^2D+f^2)}{gq_1\ell}\right).
\end{split}
\end{equation}
Finally, when $4|q$,
\begin{equation*}
\begin{split}
g(m,\overline{p}(c+qp\xi),q\ell)g(mD_q,\overline{p}(f_g+q_Dpd_\ell\xi),q_D\ell)=&e\left(\frac{-\overline{mp^2D_q}[D(c+qp\xi)^2+(f+qpd_\ell\xi)^2]}{4gq\ell}\right)\\&\times\left(\frac{g}{m}\right)\left(\frac{q_{D}\ell}{D_q}\right)\frac{2i}{\varepsilon_{m}\varepsilon_{mD_q}}\frac{q\ell}{\sqrt{g}}\delta(2\mid c,f).
\end{split}
\end{equation*}
Therefore for $4|q$,
\begin{equation}\label{arith1q4}
\begin{split}
\calA(mp,(c,f);q)=&\frac{1}{\tau(\overline{\chi})}\chi(-q\ell)\overline{\chi}(c)\delta(cd-f\equiv0\bmod p)\frac{\delta(g|f)\sqrt{g}}{q\ell}\delta(\ell\mid(cd_\ell-f))\\&\times\left(\frac{g}{m}\right)\left(\frac{q_{D}\ell}{D_q}\right)\frac{2i}{\varepsilon_{m}\varepsilon_{mD_q}}\delta(2\mid c,f)e\left(\frac{-\overline{mp^2D_q}(c^2D+f^2)}{4gq\ell}\right).
\end{split}
\end{equation}

\begin{rem}
Recall that $\ell$ split in $K$ as $(\ell)=\frl\frl'$ and $x^2+D\equiv(x-d_\ell)(x+d_\ell)\bmod\ell$. We had assumed $\frl$ to be the ideal generated by $\{\ell,\sqrt{-D}-d_\ell\}$, so that the associated lattice is $\Gamma_\frl=\{(c,f)\mid c+fd_\ell\equiv0\bmod\ell\}$. Therefore the condition $\delta(\ell|(cd_\ell-f))$ implies $(f,c)\in\Gamma_{\frl'}$, the lattice associated with the ideal $\frl'$. Note that $[\frl']=[\frl]\-$ in the class group. Similarly, $p$ splits as $(p)=\frp\frp'$ with $\frp$ generated by $\{p,\sqrt{-D}-d\}$. Therefore the condition $\delta(p\mid(cd-f))$ implies $(f,c)\in\Gamma_{\frp'}$. %The condition $\delta(2|c,f)$ implies that $(f,c)\in\Gamma_2$, the lattice associated with the ramified prime $(2)$, while the condition $\delta(2\nmid c,f)$ implies that $(f,c)\in\Gamma_{\frp_2}$, the lattice associated with the prime divisor $\frp_2$ of $(2)$.

Next, we note that $g|D$, so that the primes dividing $g$ ramify. $(g)$ factors as $(g)=\frg^2$ where $\frg$ is a squarefree product of prime ideals. The condition $g|f$ means that $(f,c)\in\Gamma_{\frg}=\{(f,c)\in\BZ^2\mid g|f\}$. Though we cannot use Dedekind's theorem to find generators of the ramified primes, it is easy to argue that $\Gamma_\frg$ is the lattice associated with $\frg$. The correspondence is given by 
\begin{equation}
\begin{split}
\Gamma_\frg&\leftrightarrow\frg\\
(f,c)&\mapsto f+c\sqrt{-D}.
\end{split}
\end{equation}
$\Gamma_\frg$ is closed under multiplication by $\calO_K$; given $(f,c)\in\Gamma_\frg$ and $(a,b)\in\calO_K$, $(f+c\sqrt{-D})(a+b\sqrt{-D})=(fa-bcD)+(fb+ca)\sqrt{-D}\in\Gamma_\frg$. Moreover $\Gamma_\frg$ strictly contains $\Gamma_g=\{(c,f)\in\BZ^2\mid g|f,c\}$, the lattice associated with the ideal $(g)$. This proves that $\Gamma_\frg$ is the lattice associated with $\frg$. Finally, we notice that $2$ also ramifies since $|disc(K)|=4D$, say $(2)=\frp_2^2$. The condition $2\nmid c,f$ in the case $2||q$ is the same as $f+c\sqrt{-D}\in\frp_2$, while the condition $2|c,f$ in the case $4|q$ is the same as $f+c\sqrt{-D}\in(2)$.

Therefore the condition $\delta(cd-f\equiv0\bmod p)\delta(\ell\mid(cd_\ell-f))\delta(g|f)$ is the same as $f+c\sqrt{-D}\in \frl'\frp'\frg$. Hence $c^2D+f^2$ is divisible by $\ell pg$.
\end{rem}

\subsubsection{case 2: $p\mid q$} In this case, $(m,p)=1$ and $\calB(\xi,b,mp,c;q)=0$ unless $p|c$. Let $c_p=c/p, c_{p\xi}=c_\xi/p$ and $q_p=q/p$. Then $\calB(\xi,b,mp,c;q)=\calB(\xi,b,m,c_p;q_p)$. We have,
\begin{equation}
\calB(\xi,b,m,c_p;q_p)=\frac{1}{q\ell}g(m,(c_{p\xi}+bq_p\ell),q\ell).
\end{equation}
Similarly $\calB(d_\ell\xi,bd,mpD,f;q)=\calB(d_\ell\xi,bd,mD_q,f_{pg};q_{pD})\delta(g|f)$. We have
\begin{equation}
\calB(d_\ell\xi,bd,mD_q,f_{pg};q_{pD})\delta(g|f)=\delta(g|f)\frac{\delta(g|f)}{q_D\ell}g(mD_q,(f_{pg\xi}+bdq_{pD}\ell),q_D\ell).
\end{equation}
We evaluate the gauss sums using lemma \ref{gausslemma}. The exponentials are quadratic in $b$. On taking the product of $\calB(\xi,b,m,c_p;q_p)$ and $\calB(d_\ell\xi,bd,mD_q,f_{pg};q_{pD})$, we will see that the $b^2$ term vanishes. To be explicit, we present the details of the case when $q$ is odd.
\begin{equation*}
\begin{split}
g(m,\overline{g_\ell}(c_{p\xi}+bq_p\ell),q\ell)g(mD_q,(f_{pg\xi}+bdq_{pD}\ell),q_D\ell)=&e\left(\frac{-\overline{4mD_q}[D(c_{p\xi}+bq_p\ell)^2+(f_{pd_\ell\xi}+bdq_p\ell)^2]}{gq\ell}\right)\\
&\times\left(\frac{m}{g}\right)\left(\frac{D_q}{q_D\ell}\right)\varepsilon_{q\ell}\varepsilon_{q_D\ell}\frac{q\ell}{\sqrt{g}}.
\end{split}
\end{equation*}
Expanding the squares, the coefficient of $b^2$ is $(D+d^2)q_p^2\ell^2$, which is divisible by $gq\ell$. One can check that the corresponding linear $b$-term is $e(-\overline{2mD}(Dc_{p}+f_{p}d)b/p)$. The sum over $b$ gives,
\begin{equation}
\frac{1}{\tau(\overline{\chi})}\sum_{b\bmod p}\overline{\chi}(b)e\left(\frac{-\overline{2mD}(Dc_{p}+f_{p}d)b}{p}\right)=\overline{\chi}(-4md)\chi(c_pd-f_p).
\end{equation}
The sum over $\xi\bmod\ell$ is executed exactly as before. Putting it all together, when $q$ is odd,
\begin{equation}\label{arithpqodd}
\begin{split}
\calA(m,(c_p,f_p);q_p)=&\overline{\chi}(-2md)\chi(c_pd-f_p)\frac{\delta(g|f)\sqrt{g}}{q\ell}\delta(p\mid c,f)\delta(\ell\mid(cd_\ell-f))\\&\times\left(\frac{m}{g}\right)\left(\frac{D_q}{q_D\ell}\right)\varepsilon_{q\ell}\varepsilon_{q_D\ell}e\left(\frac{-\overline{4mD_q}(c^2D+f^2)}{gqp^2\ell}\right).
\end{split}
\end{equation}
Similarly when $2||q$,
\begin{equation}\label{arithpq2}
\begin{split}
\calA(m,(c_p,f_p);q_p)=&\overline{\chi}(-2md)\chi(c_pd-f_p)\frac{2\delta(g|f)\sqrt{g}}{q\ell}\delta(p\mid c,f)\delta(\ell\mid(cd_\ell-f))\\&\times\left(\frac{2m}{g}\right)\left(\frac{D_q}{q_{1D}\ell}\right)\varepsilon_{q_1\ell}\varepsilon_{q_{1D}\ell}\delta(2\nmid c,f)e\left(\frac{-\overline{8mD_q}(c^2D+f^2)}{gq_1p^2\ell}\right).
\end{split}
\end{equation}
Finally when $4|q$,
\begin{equation}\label{arithpq4}
\begin{split}
\calA(m,(c_p,f_p);q_p)=&\overline{\chi}(-2md)\chi(c_pd-f_p)\delta(p\mid c,f)\frac{\delta(g|f)\sqrt{g}}{q\ell}\delta(\ell\mid(cd_\ell-f))\\&\times\left(\frac{g}{m}\right)\left(\frac{q_{D}\ell}{D_q}\right)\frac{2i}{\varepsilon_{m}\varepsilon_{mD_q}}\delta(2\mid c,f)e\left(\frac{-\overline{mD_q}(c^2D+f^2)}{4gqp^2\ell}\right).
\end{split}
\end{equation}
\begin{rem}
The condition $p|c, f$ corresponds to $(c,f)\in\Gamma_p=\{(c,f)\in\BZ^2\mid p \textit{ divides } c,f\}$, which is the lattice associated to the ideal $(p)$. Therefore the condition $\delta(p\mid c,f)\delta(\ell\mid(cd_\ell-f))\delta(g|f)$ is the same as $f+c\sqrt{-D}\in \frl'(p)\frg$. Therefore $c^2D+f^2$ is divisible by $\ell p^2g$.
\end{rem}

We have thus computed explicitly the analytic and arithmetic parts. We can get an explicit Voronoi formula by putting together the expressions (\ref{analytic}), (\ref{arith1qodd}), (\ref{arith1q2}), (\ref{arith1q4}), (\ref{arithpqodd}), (\ref{arithpq2}) and (\ref{arithpq4}). We shall update the article in the next version of the paper and show that our Voronoi formula matches with that in \cite{kmv, boming17}.

%\section{Squarefull level case}\label{appendixb}

\end{appendices}

\subsection*{Acknowledgments} I would like to thank Prof. Ritabrata Munshi for suggesting the problem, and Prof. Roman Holowinsky for many insightful discussions and encouragement.

\bibliographystyle{amsplain}
\bibliography{ref}

\address{Department of Mathematics, The Ohio State University, 100 Math Tower,
231 West 18th Avenue, Columbus, OH 43210-1174}\\
Email address: \email{aggarwal.78@osu.edu}

\end{document}